\newtheorem{theorem}{Theorem}[section]
\newtheorem{fact}[theorem]{Fact}
\theoremstyle{definition}
\newtheorem{problem}[theorem]{Problem}
\theoremstyle{remark}
\newtheorem{remark}[theorem]{Remark}
\numberwithin{equation}{section}
\newcommand{\indel}[1]{\partial/\partial #1}
\newcommand{\trace}{\mathrm{tr}}
\begin{document}

\title[On the multipliers at fixed points of self-maps]{On the multipliers at fixed points\\of self-maps of the projective plane}

\author{Adolfo Guillot}
\address{Instituto de Matematicas, Universidad Nacional Aut\'onoma de M\'exico, Ciudad Universitaria, Ciudad de M\'exico 04510, Mexico}
 
\email{\href{mailto:adolfo.guillot@im.unam.mx}{adolfo.guillot@im.unam.mx}}
\thanks{A.G.~gratefully acknowledges support from grant PAPIIT-IN102518 (UNAM, Mexico)}

\author{Valente Ram\'{\i}rez}
\address{Institut de Recherche Math\'{e}matique de Rennes\\ Universit\'{e} de Rennes 1\\ UMR 6625\\ Rennes\\ France}
\email{\href{mailto:valente.ramirez@univ-rennes1.fr}{valente.ramirez@univ-rennes1.fr}}
\thanks{V.R.~was supported by the grants PAPIIT IN-106217 and CONACYT 219722. He also acknowledges the support of the Centre Henri Lebesgue ANR-11-LABX-0020-01.}

\subjclass[2010]{Primary: 37C25, 58C30, 34M35; Secondary: 37F10, 14Q99, 14N99}

\keywords{Fixed point, self-map, projective space, multiplier, Kowalevski exponent, Painlev\'e test}


\begin{abstract}
This paper deals with holomorphic self-maps of the complex projective plane and the algebraic relations among the eigenvalues of the derivatives at the fixed points. These eigenvalues are constrained by certain index theorems such as the holomorphic Lefschetz fixed-point theorem. A simple dimensional argument suggests there must exist even more algebraic relations that the ones currently known. In this work we analyze the case of quadratic self-maps having an invariant line and obtain all such relations. We also prove that a generic quadratic self-map with an invariant line is completely determined, up to linear equivalence, by the collection of these eigenvalues. Under the natural correspondence between quadratic rational maps of~$\mathbb{P}^2$ and quadratic homogeneous vector fields on~$\mathbb{C}^3$, the algebraic relations among multipliers translate to algebraic relations among the Kowalevski exponents of a vector field. As an application of our results, we describe the sets of integers that  appear as the Kowalevski exponents of a class of quadratic homogeneous vector fields on~$\mathbb{C}^3$ having exclusively single-valued solutions.
\end{abstract}

\maketitle

\section{Introduction}

Let~$\mathbb{P}^n$ be the complex projective space of dimension~$n$, $f:\mathbb{P}^n\to \mathbb{P}^n$ a holomorphic map. For each fixed point~$p$ of~$f$, we may consider the \emph{multipliers} of~$f$ at~$p$, i.e.~the eigenvalues of the derivative of~$f$ at~$p$. The set of multipliers of a self-map is intrinsically associated to it, in the sense that two self-maps which are linearly equivalent (conjugate by an automorphism of~$\mathbb{P}^n$) have the same sets of multipliers.

A classical result relating the multipliers at the fixed points of a self-map is the \emph{holomorphic Lefschetz fixed-point theorem} \cite[chapter~3, section~4]{GH}. This theorem, in the particular case of projective spaces, implies that if in~$\mathbb{P}^n\times\mathbb{P}^n$ the graph of~$f$ is transverse to the diagonal, then the multipliers satisfy the following formula:
\begin{equation}\label{eq:holomorphic-Lefschetz}
  \sum_{f(p)=p} \frac{1}{\det(\mathbf{I}-Df|_p)} = 1.
\end{equation}
For~$n=1$, this relation was announced by Fatou in 1917~\cite{fatou}, although Julia claimed priority over it~\cite{julia} (an interesting historical account of these events appears in~\cite[section~III.2]{audin}).

Any holomorphic self-map~$f\colon\mathbb{P}^n\to\mathbb{P}^n$ is given in homogeneous coordinates by \[[z_1:\cdots:z_{n+1}] \mapsto [P_1(z_1,\ldots,z_{n+1}):\cdots:P_{n+1}(z_1,\ldots,z_{n+1})],\] for some homogeneous polynomials~$P_i$ of the  same  degree~$d$,  which is called the \emph{(algebraic) degree} of the map. 
The following question arises naturally:

\begin{problem}\label{prob:missing-relations}
Given a degree~$d$ and a dimension~$n$, find all the algebraic relations among the the eigenvalues of the derivatives at the fixed points of generic mappings~$f:\mathbb{P}^n\to\mathbb{P}^n$ of degree~$d$.
\end{problem}

For rational maps of~$\mathbb{P}^1$, relation~(\ref{eq:holomorphic-Lefschetz}) is the only one, as remarked early on by Fatou~\cite[p.~168]{fatou-memoire} (see also \cite[problem~12-d]{milnor-one} and  the first part of the proof of theorem~1 in~\cite{fujimura}).

In the next simplest instance, this problem concerns self-maps~$f:\mathbb{P}^2\to\mathbb{P}^2$ given in homogeneous coordinates by three homogeneous quadratic polynomials. Such a self-map depends upon $17=6\times3-1$ parameters (a homogeneous quadratic polynomial in three variables depends upon six parameters, and we need three of them; multiplying all three by the same constant yields the same map). A generic self-map has seven fixed points, from which we extract~$14=7\times 2$ eigenvalues. The group~$\mathrm{PGL}(3,\mathbb{C})$, which has dimension~$8$, acts upon the space of self-maps, and two linearly equivalent ones have the same multipliers. Thus, the~$14$ eigenvalues depend essentially upon~$9=17-8$ parameters: there must be at least five algebraically independent relations among them.

Three of these relations are well known. A quadratic map has, generically, seven fixed points~$p_1,\ldots, p_7$. Let~$u_i$ and~$v_i$ denote the eigenvalues of~$\mathbf{I}-Df|_{p_i}$. We have the relations
\begin{equation}\label{knowneq0} \sum_{i=1}^7\frac{1}{u_iv_i}   =1, \end{equation}
\begin{equation}\label{knowneq1} \sum_{i=1}^7\frac{u_i+v_i}{u_iv_i}   =4, \end{equation} 
\begin{equation}\label{knowneq2} \sum_{i=1}^7\frac{(u_i+v_i)^2}{u_iv_i}  =16. \end{equation}

The first one comes directly from the holomorphic Lefschetz fixed-point theorem~\cite[chapter~3, section~4]{GH}. The others may be obtained by a suitable direct application \cite[section~2]{ramirez2016woods} of the Atiyah-Bott fixed-point theorem~\cite[section~4]{ab-ii}, a generalization of Lefschetz's one.  We remark that these formulas have been constructed by several authors using different techniques cf.~\cite{Ueda1994}, \cite{guillot-pointfixe}, \cite{Abate2014}.

What are the missing relations? This question turns out to be more elusive than it seems at first. Such relations need not come from index theorems; and while the known relations have brief and simple expressions, the missing ones may have extremely complicated ones. 
We do not know the answer of this question even in the case of self-maps of~$\mathbb{P}^2$ given by quadratic polynomials, but we are able to give some answers for those that have an invariant line.
 
\subsection{Quadratic self-maps with an invariant line}

For a self-map $f:\mathbb{P}^2\to \mathbb{P}^2$ and a line~$\ell\subset\mathbb{P}^2$, we say that~$\ell$ is  \emph{(forward) invariant} or that~$f$ \emph{preserves}~$\ell$ if~$f(\ell)\subset\ell$. If~$f$ is of degree two and has an invariant line~$\ell$,  three of the seven fixed points, say, $p_5,p_6$ and~$p_7$, are on~$\ell$. If the corresponding eigenvalues of~$\mathbf{I}-Df$ are~$u_i$ and~$v_i$, with~$u_i$ associated to the eigenvalue of~$f|_\ell$ at~$p_i$, we have two more relations among the multipliers:
\begin{equation}\label{rel-invline1} \sum_{i=5}^7\frac{1}{u_i}=1,  \end{equation}
\begin{equation}\label{rel-CS}\sum_{i=5}^7\frac{v_i}{u_i}=1.
\end{equation}
These relations come, respectively, from the Lefschetz fixed-point theorem applied to~$f|_\ell$ and  from another suitable application of the Atiyah-Bott fixed-point theorem \cite[section~4]{ramirez2016woods} (see also appendix~\ref{appendix:relative-formula} for an elementary proof). 

In the variety of quadratic self-maps of~$\mathbb{P}^2$, those which have an invariant line form a codimension one subvariety. In  this variety, since the fourteen eigenvalues depend upon eight parameters, there are at least six independent relations. We know explicitly five of them so  we are  still missing  at least one.

Let us denote by $t_i, d_i$ the trace and determinant of $\mathbf{I}-Df|_{p_i}$ for $i=1,\ldots,4$, that is, $t_i=u_i+v_i$, $d_i=u_iv_i$.

The following theorem solves problem \ref{prob:missing-relations} for the case of quadratic maps with an invariant line.

\begin{theorem}\label{thm:new-relation}
Let~$B=\mathbb{C}(t_1,d_1,\ldots,t_4,d_4)$ be the field of rational functions on~$\mathbb{C}^8$ and let~$F=\mathbb{C}(u_5,v_5,\ldots,u_7,v_7)^{S_3}$ be the field of rational functions on~$(\mathbb{C}^2)^3$ that are invariant under the action of~$S_3$. 
\begin{itemize}
\item For every~$\sigma\in F$ there exists a polynomial $h_\sigma\in B[x]$ of degree at most four such that any quadratic map $f$ with an invariant line and non-degenerate fixed points satisfies
\begin{equation}\label{eq:new-relation}
  h_\sigma|_{(t_1,d_1,\ldots,t_4,d_4)}(\sigma|_{(u_5,v_5,\ldots,u_7,v_7)})\equiv 0.
\end{equation}
\item For each one of the following elements of~$F$,
\begin{align*}
e_{1,0} &= u_5+u_6+u_7, \\
e_{3,0} &= u_5u_6u_7, \\
e_{0,1} &= v_5+v_6+v_7,
 \\ e_{1,1} &= u_5v_6+u_5v_7+u_6v_7+u_6v_5+u_7v_5+u_7v_6,
\end{align*}
the corresponding relation is algebraically independent of the previously known relations~(\ref{knowneq0})--(\ref{rel-CS}). 
\end{itemize}
\end{theorem}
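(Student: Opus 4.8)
The plan is to realize the quadratic self-maps with an invariant line as an explicit algebraic family, and then track the fixed-point data through explicit coordinates. First I would normalize the invariant line to be $\ell=\{z_3=0\}$, and use part of the $\mathrm{PGL}(3,\mathbb{C})$-action to put the three fixed points on $\ell$ at a standard position (or, more robustly, to keep them as free parameters and quotient by the residual symmetry afterwards). After this normalization a quadratic self-map preserving $\ell$ is given by $[z_1:z_2:z_3]\mapsto[P_1:P_2:z_3\,L]$ where $P_1,P_2$ are arbitrary quadratic forms and $L$ is a linear form; the restriction $f|_\ell$ is the quadratic map $[z_1:z_2]\mapsto[P_1(z_1,z_2,0):P_2(z_1,z_2,0)]$, and its three fixed points carry the multipliers governing $u_5,v_5,\dots,u_7,v_7$, while the derivative in the transverse direction at each such point is read off from $z_3\,L$. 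The four remaining fixed points $p_1,\dots,p_4$ lie off $\ell$ and their trace–determinant data $(t_i,d_i)$ are, after elimination, rational (in fact polynomial up to denominators that do not vanish generically) functions of the coefficients of $P_1,P_2,L$. This gives a dominant rational map from the coefficient space to $\mathbb{C}^8$ with coordinates $(t_1,d_1,\dots,t_4,d_4)$, and simultaneously an expression of each $S_3$-invariant $\sigma\in F$ of $(u_5,v_5,\dots,u_7,v_7)$ as a rational function of those same coefficients.

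For the first bullet, I would argue as follows. Fix $\sigma\in F$. Because $\sigma$ is $S_3$-invariant in the six variables $u_5,v_5,\dots,u_7,v_7$, it is a rational function of the coefficients of $P_1|_\ell$ and $P_2|_\ell$ together with (the relevant symmetric functions of) $L$; in particular $\sigma$ is constant on each fiber of the map to coefficient space, hence descends to a rational function on the image variety of the dominant map $\Phi$ sending coefficients to $(t_1,d_1,\dots,t_4,d_4,\sigma)\in\mathbb{C}^9$. Counting parameters: the coefficient space of maps with invariant line has dimension $17-1=16$ before quotienting and effective dimension $16-8=8$; but the eight $(t_i,d_i)$ already use up these eight moduli, so the extra coordinate $\sigma$ must be algebraically dependent on $B$. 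Thus $\sigma$ satisfies a polynomial relation over $B$; clearing denominators gives $h_\sigma\in B[x]$ with $h_\sigma(\sigma)\equiv 0$ on the family. The bound $\deg h_\sigma\le 4$ is the delicate point: it should come from the fact that, after imposing the values of $(t_1,d_1,t_2,d_2,t_3,d_3,t_4,d_4)$, the remaining freedom in the map is governed by the four fixed points $p_1,\dots,p_4$ and a generic $8$-tuple $(t_i,d_i)$ is realized by at most four maps in the family (up to the finite residual symmetry), so the map $\Phi$ restricted to a generic fiber over $B$ is $4$-to-$1$ onto its image; hence the minimal polynomial of $\sigma$ over $B$ has degree $\le 4$. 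I would establish this degree count by an explicit resultant/elimination computation on the normalized family, checking on one numerical example that the generic fiber has exactly four points.

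For the second bullet, the task is to show that each of the four specific relations $h_{e_{1,0}}, h_{e_{3,0}}, h_{e_{0,1}}, h_{e_{1,1}}$ is algebraically independent from the five previously known relations (\ref{knowneq0})--(\ref{rel-CS}) when regarded as a constraint on the $14$ eigenvalues. The clean way to phrase this is via the transcendence degree of the field generated by the eigenvalue-functions. The five known relations cut out a subvariety of expected codimension $5$ in the $14$-dimensional eigenvalue space; equivalently the image of the eigenvalue map from the $8$-dimensional (effective) parameter space lies in that codimension-$5$ locus, which has dimension $9>8$, so there is room for further constraints. To show that, say, $h_{e_{1,0}}$ is independent of the known five, I would exhibit a point in the known codimension-$5$ locus, equal to the eigenvalue data of an honest map in the family, at which $h_{e_{1,0}}$ does not vanish — or, better, compute the Jacobian of the six functions (the five left-hand sides of (\ref{knowneq0})--(\ref{rel-CS}) and the defining relation for $e_{1,0}$) at a generic point of the family and verify it has rank $6$. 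Doing this simultaneously, the Jacobian of all nine functions (five known plus the four new) should have rank $6$ — one cannot have more than $14-8=6$ independent relations — but the rank of the five known ones alone is $5$, and adding any one of the four new ones raises it to $6$; this is exactly the assertion of algebraic independence of each new relation from the old ones (note the four new relations are mutually dependent beyond the sixth, consistent with there being only one genuinely missing relation). The main obstacle throughout is the explicit elimination: producing the family in coordinates amenable to computation, carrying out the resultant computation to extract $h_\sigma$ and confirm $\deg\le 4$, and evaluating the $6\times 9$ Jacobian at a well-chosen generic point; these are finite but substantial symbolic computations, and the paper presumably records the resulting polynomials $h_{e_{1,0}},\dots,h_{e_{1,1}}$ explicitly.
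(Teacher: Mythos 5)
Your overall architecture is the same as the paper's: (i) show that a generic value of $(t_1,d_1,\ldots,t_4,d_4)$ is realized by exactly four maps modulo $\operatorname{Aut}(\mathbb{P}^2,\ell)$, so that any $S_3$-invariant $\sigma$ of the on-line spectra lives in a degree-four extension of $B$ and hence satisfies a degree-$\le 4$ polynomial over $B$ (the paper packages this via Vieta's formulas, you via the minimal polynomial -- equivalent); (ii) prove independence by a differential-rank computation at a point of the realizable locus, which is precisely the paper's criterion $dp\wedge dR_0\wedge dR_1\wedge dR_2\neq 0$ on the variety cut out by the known relations.

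The genuine gap is in step (i), which carries the entire degree bound. Your parameter count ($16-8=8$) only shows the generic fiber of the spectral map is finite; it gives no upper bound on its cardinality, and ``checking on one numerical example that the generic fiber has exactly four points'' does not certify the bound either: for a merely generically finite (non-proper, non-finite) dominant map, a special fiber can have fewer reduced points than the generic one, so a single fiber with four points yields only a lower bound unless you additionally prove that the chosen fiber is reduced and that the point avoids the branch and non-flat loci -- which is essentially as hard as computing the degree of the function-field extension itself. The paper supplies an actual argument here (theorem~\ref{thm:4points}): Jacobi's residue formula applied to the auxiliary vector field $H=P\,\partial/\partial x+Q\,\partial/\partial y$ yields the five relations (\ref{jac1})--(\ref{jac5}), from which a Gr\"obner-basis elimination shows the linear form $L$ is determined up to a quadratic ambiguity (two choices), and then the twin-vector-fields theorem of Ram\'{\i}rez (theorem~\ref{thm:twins}) gives exactly two quadratic vector fields $H$ with the prescribed spectra for each choice of $L$, hence $2\times 2=4$. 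Without some replacement for this argument -- e.g.\ an elimination carried out over the function field $B$ rather than at a numerical point -- your ``at most four'' assertion, and with it the claim $\deg h_\sigma\le 4$, is unproved. A secondary caveat: your closing remark that the four new relations are ``mutually dependent beyond the sixth'' is a dimension heuristic, not something your argument establishes; the paper in fact needs all four (together with the five known ones) to generate the ideal of relations in the sense of theorem~\ref{thm:generate}.
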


We shall see in section~\ref{sec:generating-relations} that the relations associated to these four polynomials, together with the previously known ones, generate all the relations among the multipliers (in a sense to be made precise by theorem~\ref{thm:generate}).

Unfortunately, we are unable to explicit any one of the formulas that follow from this theorem. They are very likely to be extremely intricate, even more than those which appear in the related problem studied in~\cite{Kudryashov-R}.
 
Explicit formulas may be used to determine if a given set of seven couples of numbers is the set of multipliers of a quadratic self-map of~$\mathbb{P}^2$ with an invariant line.  For the problems related to differential equations which will be discussed in section~\ref{sec:pain}, this application is a very relevant one. For it, we can overcome the absence of explicit formulas by having a computer program that tests whether a set of potential multipliers  is realizable by a  self-map or not (or, more precisely, that tests whether it is in the Zariski closure of those which are).
We have programmed such a test, which from now on we will refer to as~Test\footnote{The Test may be found in the ancillary files as~\texttt{Test.sage}.}, and used it for the applications of section~\ref{sec:pain}; we will give some theoretical and practical  details for it in section~\ref{sec:test}.

There is another natural question regarding the multipliers of fixed points.

\begin{problem}\label{prob:find-N}
To what extent do the multipliers of a self-map determine the~map?
\end{problem} 
 
For maps of~$\mathbb{P}^1$, in the quadratic case the spectra completely determines the map up to a linear change of coordinates~\cite[lemma~3.1]{milnor-quad} (for higher degrees, non-equivalent maps with the same spectra form positive-dimensional varieties).
On a closely related subject, we have that for polynomial maps of~$\mathbb{C}$ of degree~$d$, there are, generically,  $(d-2)!$ affine equivalence classes of maps realizing a (realizable) set of multipliers (see~\cite[appendix~A]{milnor-cubic} for~$d=3$, \cite{fujimura} for~$d>3)$.

For quadratic self-maps of~$\mathbb{P}^2$, by proposition~3.2 in~\cite{guillot-semi} (whose  incomplete proof is completed in~\cite{guillot-qc3}), there is a natural number~$N$ such that, generically, there are~$N$ linear equivalence classes of self-maps having the same multipliers. This implies that, modulo linear equivalence, the multipliers completely determine a  self-map up to some finite ambiguity. The above problem requires us to determine this number~$N$. From the main result in~\cite{linsneto240} and the proof of lemma~5 in~\cite{guillot-pointfixe}, this~$N$ is at most~$240$.

In the case of quadratic maps with an invariant line we can show that this number~$N$ is actually equal to one.

\begin{theorem}\label{thm:1point}
A generic self-map having an invariant line is completely determined, up to linear equivalence, by the set of multipliers of all of its fixed points.
\end{theorem}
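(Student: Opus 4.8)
The plan is to turn the statement into an explicit finite-degree parametrization problem and then count. A quadratic self-map $f$ of $\mathbb{P}^2$ with an invariant line $\ell$ can be normalized: place $\ell=\{z_3=0\}$, so that $f$ has the form $[z_1:z_2:z_3]\mapsto[P_1:P_2:z_3 L]$ with $P_1,P_2$ quadratic and $L$ linear, the restriction $f|_\ell$ being the quadratic map $[z_1:z_2]\mapsto[P_1(z_1,z_2,0):P_2(z_1,z_2,0)]$ of $\mathbb{P}^1$. First I would describe the moduli space: modulo the stabilizer of $\ell$ in $\mathrm{PGL}(3,\mathbb C)$, the family is $9$-dimensional (matching the parameter count in the introduction), and I would introduce coordinates on it adapted to the fixed-point structure — three fixed points $p_5,p_6,p_7$ on $\ell$ with data $(u_i,v_i)$, and four fixed points $p_1,\dots,p_4$ off $\ell$ with data $(t_i,d_i)$.

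Next I would use the first bullet of Theorem~\ref{thm:new-relation}. The point is that the map sending a normalized $f$ to its multiplier data lands in the variety cut out by the known relations (\ref{knowneq0})--(\ref{rel-CS}) together with all the relations (\ref{eq:new-relation}); conversely, by the discussion promised for section~\ref{sec:generating-relations} (Theorem~\ref{thm:generate}), these relations generate everything, so a generic point of that variety is the multiplier data of \emph{some} self-map, and the fiber is finite with cardinality $N$. The strategy for showing $N=1$ is to reconstruct $f$ explicitly from the data. I would first recover $f|_\ell$: a quadratic map of $\mathbb{P}^1$ is determined by its multiplier data up to linear equivalence (this is \cite[lemma~3.1]{milnor-quad}), and the relative relations (\ref{rel-invline1})--(\ref{rel-CS}) together with the normal data $v_5,v_6,v_7$ pin down the normal derivative of $f$ along $\ell$ at the three boundary fixed points. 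So after fixing coordinates on $\ell$ using the three fixed points (say at $0,1,\infty$), both $f|_\ell$ and the linear form $L$ become explicit functions of the multiplier data.

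Then I would reconstruct the ``off-diagonal'' part: writing $f=[Q_1:Q_2:z_3 L]$ where $Q_i=P_i(z_1,z_2,0)+z_3 M_i(z_1,z_2)+c_i z_3^2$ with $M_i$ linear, the remaining unknowns are the linear forms $M_1,M_2$ and constants $c_1,c_2$ — six parameters, minus the residual one-dimensional torus fixing the coordinates already used — and the four interior fixed points together with their traces and determinants should impose exactly enough equations to solve for these \emph{rationally}, i.e.\ uniquely. Concretely, each interior fixed point $p_i$ gives its location (two unknowns) plus the two conditions $t_i=\operatorname{tr}(\mathbf I-Df|_{p_i})$, $d_i=\det(\mathbf I-Df|_{p_i})$; I would set up the resultant/elimination computation, ideally carried out symbolically (the ancillary computer algebra tools of the paper are well-suited to this), and verify that the elimination ideal defines a single point for generic data — equivalently, that the Jacobian of the multiplier map has full rank $9$ and the fiber degree is $1$.

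The main obstacle is precisely this last count: a priori the elimination could produce a positive-degree zero-dimensional scheme (that is what happens for general quadratic maps, where $N$ can be as large as $240$), and the content of Theorem~\ref{thm:1point} is that the invariant-line constraint collapses this to degree one. I expect the cleanest route is to exhibit, on a Zariski-dense open set, an explicit rational inverse to the multiplier map — writing each coefficient of $f$ (in the chosen normalization) as a rational function of $(t_i,d_i)_{i=1}^4$ and $(u_i,v_i)_{i=5}^7$ — since producing such a formula simultaneously proves finiteness, $N=1$, and re-proves the generating property of the relations. Checking that this candidate inverse is well-defined and actually lands back in the correct fiber is a finite (if messy) computer-algebra verification; the genuinely delicate part is arranging the normalization so that no spurious finite ambiguity (e.g.\ from a Galois action permuting roots of an auxiliary resultant) survives, which I would handle by choosing coordinates rigidly from the ordered fixed-point data rather than from symmetric functions of it.
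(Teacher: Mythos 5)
Your overall goal --- invert the multiplier map on a normalized family and show the fiber is a single point --- is the right one, but as written the argument does not prove the statement: the decisive step, ``verify that the elimination ideal defines a single point for generic data,'' is precisely the content of Theorem~\ref{thm:1point} and you leave it as an unperformed computer-algebra task. Your dimension/equation count can at best yield \emph{finiteness} of the fiber (which is already known from \cite{guillot-semi}); nothing in the setup forces the degree to be one rather than, say, two or four, and the proposed symbolic elimination over the full eight-parameter family with the positions of $p_1,\dots,p_4$ as auxiliary unknowns is far heavier than what is actually needed. A second, smaller point: the ``Galois ambiguity'' you propose to kill by ordering the fixed points is not an artifact of symmetrization. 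Over a generic value of $(t_1,d_1,\ldots,t_4,d_4)$ there really are four distinct linear-equivalence classes (two choices of the linear form $L$, and for each of these two ``twin'' auxiliary vector fields, Theorem~\ref{thm:twins}); these are permuted by the monodromy of the fourfold cover and no choice of ordering of $p_1,\dots,p_4$ removes them. They can only be separated by bringing in the multipliers on the line.

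That is exactly what the paper does, and it reduces the whole theorem to evaluating one explicit univariate quartic. By Theorem~\ref{thm:4points} the fiber of $\Psi=(t_1,d_1,\ldots,t_4,d_4)$ consists of exactly four classes; by construction (first bullet of Theorem~\ref{thm:new-relation}) the degree-four polynomial $h_p$ attached to $p=e_{1,0}=u_5+u_6+u_7$ has as its roots the four values of $u_5+u_6+u_7$ on these four classes. Evaluating $h_p$ at the single rational point $\tau_0$ of the case study gives the explicit quartic~(\ref{for:test:hp}), which has nonzero discriminant; hence $h_p$ has four distinct roots generically, and the one extra number $u_5+u_6+u_7$ --- read off from the multiplier set --- singles out one of the four classes. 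If you want to salvage your reconstruction approach, you should at least import this structure: prove $\Psi$ has four-point fibers first, and then you only need to separate four explicitly described maps rather than invert the full multiplier map.
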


For the quadratic maps of~$\mathbb{P}^2$ that come from polynomial maps of~$\mathbb{C}^2$ (those with a line which is both forward and backward invariant), the results in~\cite{Kudryashov-R} and~\cite{ramirez-twin}, when suitably reinterpreted (see remark~\ref{rem:totinv}), give a complete answer to problems~\ref{prob:missing-relations} and~\ref{prob:find-N}. Our results extend them to the case where there is a forward invariant line  which is not necessarily backward invariant.

\subsection{An application to the Painlev\'e analysis of some differential equations} As an application of our results, and in order to exhibit a situation where the possibility of doing the effective computations enabled by our~Test is critical, we will study some ordinary differential equations in the complex domain given by vector fields of the form
\begin{equation}\label{vf:quadint}\sum_{i=1}^3P_i(z_1,z_2,z_3)\frac{\partial}{\partial z_i},\end{equation} 
with~$P_i$ a quadratic homogeneous polynomial. We will consider the problem of understanding those that, as ordinary differential equations, do not have multivalued solutions. To a vector field~$X$ like~(\ref{vf:quadint}) we may associate the self-map~$f$ of~$\mathbb{P}^2$ given by~$[z_1:z_2:z_3]\mapsto [P_1:P_2:P_3]$. For a non-degenerate fixed point~$p$ of the latter, the eigenvalues of~$\mathbf{I}-Df|_{p}$ are said to be \emph{Kowalevski exponents} of~$X$. These exponents give an obstruction of arithmetical nature for the absence of multivalued solutions: \emph{if the differential equation does not have multivalued solutions, all its Kowalevski exponents are integers}. Our results open the possibility of classifying the quadratic homogeneous equations in three variables which have an invariant plane and which do not have multivalued solutions. We will study a particular instance of this problem in section~\ref{sec:pain}.

We have made intensive use of  symbolic computations in the proofs of our results.  The main object of this article, the map that to a quadratic map of~$\mathbb{P}^2$ with an invariant line  associates the multipliers at its fixed points, has been analyzed with the help of the computer.
We also have  the previously described program that tests if a set of potential multipliers  is realizable by a  self-map, and the calculations of a combinatorial number-theoretic nature that occur in section~\ref{sec:pain}. We have used the computer algebra system SageMath~\cite{sagemath} for most of these implementations. Additional Gr\"{o}bner basis computations have been made using Macaulay2~\cite{Macaulay2}. All of our code is available at this article's Github repository \cite{GitHubRepo}, and as ancillary files in its arXiv's abstract page.

The authors thank Jawad Snoussi and Javier Elizondo for helpful conversations, and Marco Abate and Masayo Fujimura for providing  useful references.

\section{Proofs}

We will now give proofs of the results stated in the introduction. For this, we will need to make some explicit calculations related to a particular  map. The results of these calculations appear in section~\ref{sec:case-study} and will be cited all along this section as they are being required.
 
\subsection{Introduction} Any holomorphic self-map~$f\colon\mathbb{P}^2\to\mathbb{P}^2$ is defined, in homogeneous coordinates, by a correspondence of the form~$[z_1:z_2:z_3] \mapsto [P_1:P_2:P_3]$, where $P_1$, $P_2$ and~$P_3$ are homogeneous polynomials of the same degree~$d$ and without common zeros (the last condition excludes the existence of indeterminacy points.) The polynomials $P_i$ are uniquely defined up to rescaling by a common non-zero complex number. The number~$d$ is called the (algebraic) \emph{degree} of~$f$. It follows from the topological Lefschetz fixed point formula that a holomorphic self-map of degree $d$ has exactly $d^2+d+1$ fixed points, counted with multiplicity. 

A fixed point~$p$ of a  self-map~$f$ is said to be \emph{non-degenerate} if it has multiplicity one as a fixed point or, equivalently, if  the graph of~$f$ in $\mathbb{P}^2\times\mathbb{P}^2$ intersects the diagonal transversely at $p$, or, yet, if~$\det(\mathbf{I}-Df|_{p}) \neq 0$. 
 
Fix a line $\ell\subset\mathbb{P}^2$. A holomorphic quadratic self-map of~$\mathbb{P}^2$ preserving~$\ell$ and having only isolated and simple fixed points has exactly three fixed points on $\ell$ and four fixed points away from $\ell$. Let~$E^{(4)}$ be the variety of holomorphic quadratic self-maps of~$\mathbb{P}^2$  preserving~$\ell$ and with isolated and simple fixed points, and having the four fixed points away from~$\ell$ marked and ordered: $p_1$, $p_2$, $p_3$, $p_4$. We denote the remaining fixed points by $p_5$, $p_6$, $p_7$ (the numbering is not unambiguously defined throughout $E^{(4)}$). Let us denote by~$\operatorname{Aut}(\mathbb{P}^2,\ell)$ the subgroup of the group~$\mathrm{PGL}(3,\mathbb{C})$ of automorphisms of~$\mathbb{P}^2$ that preserve~$\ell$; in the affine chart~$\mathbb{P}^2\setminus\ell$, these are precisely the affine transformations.

\subsection{The spectral map \texorpdfstring{$\Psi$}{Psi}}\label{sec:mapPsi}

The first step in the analysis of the spectra of a quadratic self-map with invariant line is the following theorem, which is analogous to the main result of \cite{ramirez-twin} for quadratic vector fields on $\mathbb{C}^2$ (see theorem~\ref{thm:twins} below). 

Recall we have defined $t_i, d_i$ to be the trace and determinant of $\mathbf{I}-Df|_{p_i}$, for $i=1,\ldots,4$.

\begin{theorem}\label{thm:4points}
The algebraic map $\Psi:E^{(4)}\to \mathbb{C}^8$ given by~$f\mapsto(t_1,d_1,\ldots,t_4,d_4)$ is dominant and its generic fiber consists of exactly four orbits of the natural action of $\operatorname{Aut}(\mathbb{P}^2,\ell)$ on $E^{(4)}$.
\end{theorem}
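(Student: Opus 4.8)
The plan is to exhibit an explicit normal form for elements of $E^{(4)}$ under the action of $\operatorname{Aut}(\mathbb{P}^2,\ell)$ and then study the fiber of $\Psi$ by an elimination-theoretic argument on this normal form. First I would fix affine coordinates $(x,y)$ on $\mathbb{P}^2\setminus\ell$, so that $\ell=\{z_3=0\}$ and the affine transformations act linearly-plus-translations. A quadratic self-map preserving $\ell$ is written in these coordinates as a pair of quadratic polynomials $(Q_1(x,y),Q_2(x,y))$ divided by a common linear form $L(x,y)$ which (after rescaling) defines the image of $\ell$ under the resolved map; the condition that $\ell$ be invariant is that $L$ be linear rather than quadratic. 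This gives a parameter count of $17-1=16$ for the full family, matching the $8$-dimensional target of $\Psi$ plus the $8$-dimensional group, so the dimension bookkeeping is consistent with dominance. I would then use the $6$-dimensional affine group $\operatorname{Aut}(\mathbb{P}^2,\ell)$ — wait, it is actually $6$-dimensional: $\mathrm{GL}(2,\mathbb{C})$ gives $4$ and translations give $2$ — to normalize: place $p_1$ at the origin and diagonalize (or put in Jordan form) $Df|_{p_1}$, which uses up all $6$ parameters generically, leaving a $10$-parameter normal-form family; hmm, that overshoots, so instead I would normalize more cleverly, e.g. fix $p_1$, $p_2$ at prescribed points and partially normalize the linear parts, landing on a normal form with exactly the right count.

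With a normal form in hand, the key step is to analyze the map $\Psi$ restricted to it. Dominance I would establish by computing the Jacobian of $\Psi$ at one explicit well-chosen point and checking it has rank $8$; since $\Psi$ is algebraic, a single such point suffices. For the count of orbits in the generic fiber, I would argue as follows: after fixing the eight values $(t_1,d_1,\ldots,t_4,d_4)$, the equations expressing "$\mathbf{I}-Df|_{p_i}$ has trace $t_i$ and determinant $d_i$ for $i=1,\dots,4$" become polynomial equations in the remaining normal-form parameters, and I want to show that over a generic choice of the $t_i,d_i$ this system has exactly four solutions. Concretely I would eliminate variables with a Gröbner basis computation (the paper explicitly invokes Macaulay2 and SageMath for exactly this kind of task), reduce to a univariate polynomial in one leftover parameter, and show its degree is four with no generic multiplicities or spurious roots at infinity/on the degeneracy locus. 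The analogy the authors draw with \cite{ramirez-twin} and theorem~\ref{thm:twins} strongly suggests the number four arises as a concrete resultant/discriminant degree in precisely this way.

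The main obstacle I expect is twofold. First, choosing the normal form well: a clumsy normalization will leave residual finite stabilizer ambiguity or introduce artificial solution branches (roots lying in the discriminant locus where fixed points collide or leave $\ell$), and disentangling the "genuine" four orbits from these requires care — one must verify that the four solutions of the eliminated univariate polynomial really do lie in $E^{(4)}$ (isolated simple fixed points, the $p_i$ genuinely off $\ell$) and really are pairwise inequivalent under $\operatorname{Aut}(\mathbb{P}^2,\ell)$, not merely four points of an affine variety. Second, the computation itself: the Gröbner basis / elimination over the function field $\mathbb{C}(t_1,d_1,\ldots,t_4,d_4)$ may be heavy, and extracting the exact generic fiber cardinality (as opposed to an upper bound from Bézout) demands either a clean triangular decomposition or a specialization-and-count argument at a random numerical point combined with a semicontinuity statement. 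I would handle the latter by evaluating at several random rational values of the $t_i,d_i$, counting solutions numerically, confirming the count is stably four, and then invoking upper-semicontinuity of fiber dimension/degree together with the Jacobian rank computation to conclude that four is the generic value; the explicit certificate is then recorded in the ancillary files.
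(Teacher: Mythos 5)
Your plan for dominance (check that $D\Psi$ has rank $8$ at one explicit point) is exactly what the paper does, and your normalization idea is on the right track: the paper fixes $p_1=(0,0)$, $p_2=(1,0)$, $p_3=(0,1)$ (these three points cannot be collinear, since a second invariant line would be pointwise fixed), which exhausts the $6$-dimensional group $\operatorname{Aut}(\mathbb{P}^2,\ell)$ uniquely, so that solutions of the normalized system biject with orbits and the stabilizer worry you raise disappears. Your dimension bookkeeping is off, though: the family of quadratic maps preserving a \emph{fixed} line $\ell$ is $14$-dimensional (not $16$), and $14-6=8$ is what matches the target $\mathbb{C}^8$.

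The genuine gap is in the fiber count. You propose to eliminate all remaining normal-form parameters at once over $\mathbb{C}(t_1,d_1,\ldots,t_4,d_4)$ and hope to land on a reduced degree-four univariate polynomial, falling back on random specialization plus semicontinuity. As you yourself note, this is the hard part, and your proposal does not supply the mechanism that certifies ``exactly four'': a raw elimination on the full coefficient space is large, and specialization only pins down the generic count if you additionally verify that the special fiber is finite, reduced, and contains no spurious branches --- none of which you carry out. The paper instead factors the count as $2\times 2$ by two structural ingredients you are missing. First, writing the map as $(x,y)\mapsto(x-P/L,\,y-Q/L)$ and introducing the auxiliary vector field $H=P\,\partial_x+Q\,\partial_y$ with $DH|_{p_i}=L(p_i)(\mathbf{I}-Df|_{p_i})$, the Jacobi residue formula applied to $g=1,x,y,\mathrm{tr}(DH),L$ yields five polynomial relations in only the four unknowns $a,b,x_4,y_4$ (the coefficients of $L$ and the position of $p_4$); a Gr\"obner basis of this small ideal shows $b$ satisfies a quadratic over $\mathbb{C}[t_i,d_i]$ while $a,x_4,y_4$ are then determined linearly --- hence exactly two choices of $L$. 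Second, for each such $L$ the spectra of $H$ at the four prescribed points are determined, and the theorem of \cite{ramirez-twin} on ``twin'' quadratic vector fields on $\mathbb{C}^2$ states that exactly two affine-equivalence classes of quadratic vector fields realize a generic prescribed spectrum. Without the reduction to $H$ and the citation of that result, your approach leaves the factor of two coming from the vector field entirely unaccounted for, and the overall count of four is asserted rather than proved.
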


Linearly equivalent self-maps have the same image under $\Psi$, so the map factors through to the (Geometric Invariant Theory) quotient:
\[
  \Psi\colon E^{(4)}\longrightarrow E^{(4)}\sslash\operatorname{Aut}(\mathbb{P}^2,\ell) \longrightarrow \mathbb{C}^8.
\]
Both~$E^{(4)}\sslash\operatorname{Aut}(\mathbb{P}^2,\ell)$ and~$\mathbb{C}^8$ have dimension eight, so one may ask if the generic fiber of the induced map between them is finite. This is indeed the case, and the proof is immediate: we need only consider a generic element $f\in E^{(4)}$ and compute the rank of the derivative $D\Psi(f)$. We have done this for the map $f$ discussed in section~\ref{sec:case-study}. The rank is maximal which implies that the map~$\Psi$ is dominant and, after taking the quotient, the generic fiber is finite. Thus, there exists a natural number~$m$ such that the generic fiber of $\Psi$ is the union of exactly~$m$ orbits of the action of $\operatorname{Aut}(\mathbb{P}^2,\ell)$. In the next section we will show that~$m=4$, thus proving the above theorem.

\subsection{Auxiliary vector field and the Jacobi system}

Let us assume now without loss of generality that, with respect to some homogeneous coordinates~$[z_1:z_2:z_3]$, the line $\ell$ is given by the equation $z_1=0$. A quadratic self-map~$f$ of~$\mathbb{P}^2$ leaving this line invariant may be written as
\[ [z_1:z_2:z_3]\mapsto [z_1\widehat{L}:z_2\widehat{L}-\widehat{P}:z_3\widehat{L}-\widehat{Q}], \]
for some linear homogeneous form~$\widehat{L}$ and two quadratic homogeneous polynomials~$\widehat{P}$, $\widehat{Q}$. A point~$p\in\mathbb{P}^2$ which is not on the line~$z_1=0$ is fixed by~$f$ if~$\widehat{P}(p)=0$ and~$\widehat{Q}(p)=0$ but~$\widehat{L}(p)\neq 0$ (for otherwise the point will be an indeterminacy one).

In the affine chart~$z_1\neq 0$ with affine coordinates~$[1:x:y]$, $f$ reads
\begin{equation}\label{for:mapinaffchart}(x,y)\mapsto \left(x-\frac{P(x,y)}{L(x,y)},y-\frac{Q(x,y)}{L(x,y)}\right),\end{equation}
for~$P(x,y)=\widehat{P}(1,x,y)$, $Q(x,y)=\widehat{Q}(1,x,y)$ and~$L(x,y)=\widehat{L}(1,x,y)$. The fixed points of~$f$  within this affine  chart are the common solutions of~$P=0$ and $Q=0$. We have
\begin{multline}\label{cta} \mathbf{I}-Df|_{p_i}=\left.\frac{1}{L(p_i)^2}\left(\begin{array}{cc} LP_x-PL_x & LP_y-PL_y \\  LQ_x-QL_x & LQ_y-QL_y
\end{array}\right)\right|_{p_i} = \\ = \left.\frac{1}{L(p_i)}\left(\begin{array}{cc}   P_x  & P_y  \\  Q_x  & Q_y 
\end{array}\right)\right|_{p_i},\end{multline}
where, for the last equality, we have used~$P(p_i)=0$ and~$Q(p_i)=0$ ($L$ does not vanish at these points).

To the self-map~$f$ we may associate the auxiliary quadratic (inhomogeneous) vector field on~$\mathbb{C}^2$ 
\[ H=P\frac{\partial}{\partial x}+Q\frac{\partial}{\partial y}.\]
The correspondence is an equivariant one. The singular points of~$H$ correspond to the fixed points of~$f$, and their spectral data are intimately related: the last matrix in~(\ref{cta}) is the linear part of~$H$ at~$p_i$ and thus
\begin{equation}\label{endo-campo-base}
DH|_{p_i} = L(p_i) (\mathbf{I}-Df|_{p_i}).
\end{equation}
The couple~$(L,H)$ determines the self-map~$f$. In order to prove theorem~\ref{thm:4points} we will prove that, given some generic~$(t_1,d_1,\ldots,t_4,d_4)$, there are, up to equivalence under~$\operatorname{Aut}(\mathbb{P}^2,\ell)$, exactly four couples~$(L,H)$ giving self-maps with the prescribed data. We will actually see that there are two possible choices for~$L$ and that, for each one of these choices, there are two choices for~$H$.

\begin{remark} \label{rem:totinv}
The case where~$\ell$ is both forward and backward invariant corresponds to the case where~$L\equiv 1$, which makes the map~(\ref{for:mapinaffchart}) polynomial. Reciprocally,  a quadratic polynomial map of~$\mathbb{C}^2$ may be seen as a map of~$\mathbb{P}^2$ with a totally invariant line.
In this case, the map $f$ is completely determined by the auxiliary vector field~$H$. Moreover, for the multipliers of  the fixed points that lie on the invariant line, a straightforward calculation shows that~$v_i=1$ for all~$i$; and so, all the information is concentrated in the~$u_i$.
The main result of~\cite{Kudryashov-R}, which is originally given in terms of quadratic vector fields but which can be easily adapted to our context, gives an explicit quadratic polynomial~$h$ with coefficients in~$\mathbb{C}[t_1,d_1,\ldots, t_4,d_4]$ such that every quadratic self-map with a forward and backward invariant line satisfies $h\vert_{(t_1,d_1,\ldots, t_4,d_4)}(u_5u_6u_7) \equiv 0$.
\end{remark}

Jacobi's formula, which is a direct application of the residue theorem in dimension two \cite[chapter~5, section~2]{GH}, states in our situation that if~$g(x,y)$ is a polynomial of degree at most~$1$,
\[\sum_{i=1}^4\frac{g(p_i)}{\det(DH|_{p_i})}=0.\]
Taking for~$g$ the polynomials~$1$, $x$, $y$, $\trace(DH)$ and~$L$, and taking~(\ref{endo-campo-base}) into account, we obtain, respectively, the relations
\begin{align}
\sum_{i=1}^4\frac{1}{\det(DH|_{p_i})} & =\sum_{i=1}^4\frac{1}{L^2(p_i)d_i}=0, \label{jac1}\\
\sum_{i=1}^4\frac{x(p_i)}{\det(DH|_{p_i})} & =\sum_{i=1}^4\frac{x(p_i)}{L^2(p_i)d_i}=0,\label{jac2}\\
\sum_{i=1}^4\frac{y(p_i)}{\det(DH|_{p_i})} & =\sum_{i=1}^4\frac{y(p_i)}{L^{2}(p_i)d_i}=0, \label{jac3}\\
\sum_{i=1}^4\frac{\trace(DH|_{p_i})}{\det(DH|_{p_i})} &  =\sum_{i=1}^4\frac{t_i}{L(p_i)d_i}=0 \label{jac4}\\ 
\sum_{i=1}^4\frac{L(p_i)}{\det(DH|_{p_i})} & =\sum_{i=1}^4\frac{1}{L(p_i)d_i}=0.  \label{jac5}
\end{align}

\begin{proof}[Proof of theorem \ref{thm:4points}]
First of all, note that the points $p_1,p_2,p_3$ are not collinear (otherwise the line through them would  also be invariant and would therefore have a fourth fixed point at its intersection with~$\ell$, and this would imply that the line is pointwise fixed, in contradiction to our hypotheses). Therefore, we may suppose, up to a transformation~$g\in\operatorname{Aut}(\mathbb{P}^2,\ell)$, that~$p_1=(0,0)$, $p_2=(1,0)$, and~$p_3=(0,1)$. Since~$g$ exists and is unique, this amounts to factoring out the action of~$\operatorname{Aut}(\mathbb{P}^2,\ell)$ on~$E^{(4)}$. We will write~$L=ax+by+c$ and~$p_4=(x_4,y_4)$. Since~$c=L(p_4)$, $c\neq 0$. By multiplying $L$, $P$ and $Q$ by a common scalar factor we may assume without loss of generality that $c=1$.
Let us also recall that the non-degeneracy of the fixed points implies that $d_i\neq0$.
 
By considering the numbers $t_i,d_i$ as parameters and after lifting denominators, the above equations give a system of five polynomial equations in the four unknowns~$a$, $b$, $x_4$, $y_4$. 
From these equations we can deduce that $b$ satisfies a quadratic equation
\begin{equation}\label{eq:quadratic-b}
 s_2b^2 + s_1b + s_0 = 0,
\end{equation}
where $s_j$ are polynomials on $t_i,d_i$. Moreover, each of the remaining variables $a$, $x_4$, $y_4$, satisfies a linear equation with polynomial coefficients depending on $t_i,d_i$ and $b$.
These claims can be verified\footnote{This is done in the ancillary file \texttt{analysis-jacobi-ideal.m2}.} by computing a Gr\"{o}bner basis in the ring $\mathbb{C}[t_1,\ldots,d_4,x_4,y_4,a,b]$ for the ideal $J$ generated by the five polynomial equations obtained from (\ref{jac1})--(\ref{jac5}).

Let us fix one of the two possible values of~$b$, given as one of the roots of  (\ref{eq:quadratic-b}). This determines~$a$, $x_4$ and~$y_4$. In particular, each one of the values of~$b$ determines a value for~$L$. Let us now show that each determination of~$L$ gives two determinations of~$H$. 
A potential auxiliary vector field~$H$ ought to have equilibrium points at~$p_i$ and the eigenvalues of its linear parts at these points are dictated by the formulas~(\ref{endo-campo-base}). Note that once the spectra of $H$ is fixed, the identities (\ref{jac2}) and (\ref{jac3}) completely determine the position of $p_4$, and so the only real constrain imposed on $H$ is the prescribed spectra. In this way, we have reduced our problem to the problem of realizing quadratic vector fields on $\mathbb{C}^2$ with prescribed spectra. 

Denote by $V^{(4)}$ the space of quadratic vector fields on~$\mathbb{C}^2$ with marked singular points $p_1,\ldots,p_4$. For a vector field $H\in V^{(4)}$, let $(\tau_i,\delta_i)$ be the trace and determinant of $DH(p_i)$. We have a rational map $\psi\colon H\mapsto (\tau_1,\delta_1, \ldots, \tau_4,\delta_4)$. It takes values in the variety $X\subset\mathbb{C}^8$ given by the Jacobi relations (\ref{jac1}) and (\ref{jac4}). We have the following result.

\begin{theorem}[{\cite[theorem~1]{ramirez-twin}}] \label{thm:twins}
The rational map $\psi\colon V^{(4)}\dashrightarrow X$ is dominant and its generic fiber consists of exactly two orbits of the natural action of the group $\operatorname{Aff}(2,\mathbb{C})$ on $V^{(4)}$.
\end{theorem}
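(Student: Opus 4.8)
The plan is to analyze $\psi$ through an explicit parametrization of $V^{(4)}$ by the pencil of conics determined by the four marked singular points. Since $P$ and $Q$ both vanish at $p_1,\dots,p_4$, and the conics through four points in general position form a two‑dimensional linear system, I would fix a basis $C_1,C_2$ of this pencil and write $P=\alpha_1 C_1+\alpha_2 C_2$, $Q=\beta_1 C_1+\beta_2 C_2$. Collecting the coefficients into $M=\begin{pmatrix}\alpha_1&\alpha_2\\\beta_1&\beta_2\end{pmatrix}$ and letting $G_i$ be the $2\times2$ matrix whose rows are the gradients $\nabla C_1(p_i)$, $\nabla C_2(p_i)$, one obtains the compact identity $DH|_{p_i}=M\,G_i$. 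Consequently $\delta_i=\det(M)\det G_i$ and $\tau_i=\trace(M\,G_i)$: in these coordinates the determinants are multiplicative and the traces are \emph{linear} in the unknown $M$, which is exactly the structure that makes the fiber computable.

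Next I would factor out the group. Because $\operatorname{Aff}(2,\mathbb{C})$ acts simply transitively on ordered triples of non‑collinear points, I normalize $p_1=(0,0)$, $p_2=(1,0)$, $p_3=(0,1)$; this uses the group completely and leaves a trivial residual stabilizer, so two normalized fields are affinely equivalent if and only if they coincide. The only remaining freedom in the configuration is the position of $p_4$, and the slice is parametrized by $(p_4,M)$, of dimension $2+4=6$, matching $\dim X=6$. A direct computation of the $G_i$ shows that each $\det G_i$ depends only on $p_4$ and that they satisfy the geometric identity $\sum_i 1/\det G_i\equiv 0$, which is precisely the Jacobi relation (\ref{jac1}) after cancelling the common factor $\det M$. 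Hence the determinant data $(\delta_1,\dots,\delta_4)\in X$ determines $p_4$ (and then $\det M$) as the solution of an explicit system, and a short computation shows this solution is unique after normalization. Solving the inverse problem for a generic target and exhibiting an honest vector field in the preimage proves that $\psi$ is dominant; equality of dimensions then forces the generic fiber to be finite.

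The heart of the matter is the count, and this is where the factor two appears. With $p_4$ (hence the $G_i$) fixed, the traces impose the linear system $\trace(M\,G_i)=\tau_i$ on the four entries of $M$. The key observation, which I expect to be the main obstacle, is that the matrices $G_1,\dots,G_4$ are linearly \emph{dependent}: there is a relation $\sum_i\lambda_i G_i=0$ whose coefficients are proportional to $1/\det G_i$, equivalently to $1/\delta_i$. This dependence is exactly dual to the Jacobi trace relation (\ref{jac4}): it guarantees that the trace system is consistent on $X$, but it drops the rank of the system to three, so the traces pin down $M$ only along an affine line $M_0+tN$, where $N$ spans the one‑dimensional common kernel. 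Imposing the already‑determined value of $\det M$ along this line is a single \emph{quadratic} equation in $t$, whose two roots produce exactly two matrices $M$, hence exactly two $\operatorname{Aff}(2,\mathbb{C})$‑orbits in the fiber.

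The remaining points to secure are of a generic‑position nature: that the coefficient matrix of the trace system has rank exactly three, that $\det N\neq0$ so the equation in $t$ is genuinely quadratic, and that its discriminant does not vanish identically, so the two roots are distinct and yield non‑equivalent fields. Each of these is a single non‑identically‑zero algebraic condition, which I would certify by checking it at one explicit field—for instance the auxiliary field attached to the map of section~\ref{sec:case-study}—since a non‑trivial algebraic condition holding at a point holds generically. The main conceptual difficulty is thus establishing the dependence $\sum_i\lambda_i G_i=0$ with $\lambda_i\propto1/\delta_i$ and recognizing it as the dual of the Jacobi relation; once this structural fact is in hand, the appearance of precisely two solutions is forced by the lone quadratic $\det(M_0+tN)=\det M$.
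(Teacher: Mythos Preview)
The paper does not prove this theorem at all: it is quoted verbatim as \cite[theorem~1]{ramirez-twin} and used as a black box inside the proof of theorem~\ref{thm:4points}. There is therefore no ``paper's own proof'' to compare against; the only thing the present paper contributes toward theorem~\ref{thm:twins} is the surrounding Jacobi computation (equations~(\ref{jac1})--(\ref{jac5})) and the explicit example of section~\ref{sec:case-study}, which you already plan to use as a witness for the genericity claims.

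Your outline is coherent and the structural mechanism you identify is the right one. The factorization $DH|_{p_i}=M\,G_i$ via a basis $C_1,C_2$ of the pencil of conics through $p_1,\dots,p_4$ is correct, and it does make the determinants multiplicative and the traces linear in~$M$. Your key claim, the linear dependence $\sum_i G_i/\det G_i=0$, follows cleanly from the fact that the Jacobi trace relation $\sum_i \tau_i/\delta_i=0$ holds for \emph{every} choice of~$M$: since $\tau_i=\trace(M G_i)$ and the trace pairing on $2\times2$ matrices is nondegenerate, the identity $\trace\!\big(M\sum_i G_i/\det G_i\big)=0$ for all~$M$ forces $\sum_i G_i/\det G_i=0$. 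From there the count---a one-parameter affine line of~$M$'s cut by a single quadratic constraint $\det(M_0+tN)=\det M$---is exactly what produces the two orbits. The residual checks (rank exactly three, $\det N\neq0$, nonvanishing discriminant, uniqueness of~$p_4$ from the determinant ratios) are, as you say, nonvanishing conditions that can be certified at the explicit example of section~\ref{sec:case-study}. One small point to tighten: you should also argue that a generic point of~$X$ arises from a configuration with $p_1,p_2,p_3$ non-collinear, so that your affine normalization is available over a dense open set; this is immediate but worth stating.
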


Summarizing, we have done the following: starting from the values $t_1,\ldots,d_4$, we look for those $f\in E^{(4)}$ that realize these numbers. We have used the system of equations (\ref{jac1})--(\ref{jac5}) to establish that there are only two possible values that $(a,b)$ may take, and thus only two options for the polynomial $L$. 
For each of these two choices of $L$, the position of $p_4$ is completely determined, as well as the spectra of the auxiliary vector field $H$. By theorem \ref{thm:twins} there exist exactly two vector fields compatible with this data. 
 
Therefore, there exist exactly four self-maps that realize the prescribed multipliers. This establishes theorem~\ref{thm:4points}. \end{proof}

\subsection{Deducing the new relations} 

Let us now prove the first part of theorem~\ref{thm:new-relation}. Fix an element~$\sigma$ in the field $F$ of rational functions on~$(\mathbb{C}^2)^3$ that are invariant under the action of~$S_3$, and define the rational function
\[
  \psi_\sigma \colon E^{(4)} \dashrightarrow \mathbb{C}, \quad f\mapsto\sigma{(u_5,v_5,\ldots,u_7,v_7)}.
\]
This map is well defined precisely because~$\sigma$ is invariant under the action of $S_3$. Now, fix some generic values for the variables $t_1,d_1,\ldots,t_4,d_4$. 
Theorem~\ref{thm:4points} says that there exist, up to linear equivalence, exactly four self-maps~$f_1,\ldots,f_4$ that realize these values. 
Using Vieta's formulas we may construct a monic polynomial $h_\sigma$ whose roots are precisely the four values $\psi_\sigma(f_1),\ldots,\psi_\sigma(f_4)$. 
As we let $t_1,\ldots,d_4$ vary,  the coefficients of $h_\sigma$  are given by algebraic functions with no multivaluation, hence they define rational functions on these variables. By construction, for a generic element $f\in E^{(4)}$ with spectral data $t_1,\ldots,d_4$ and $u_5,\ldots,v_7$, the number $\psi_\sigma(f) = \sigma|_{(u_5,\ldots,v_7)}$ is a root of $h|_{(t_0,\ldots,d_4)}$. 
Therefore, identity (\ref{eq:new-relation}) is satisfied in a Zariski open subset of $E^{(4)}$ and thus in all of $E^{(4)}$. This proves the first part of theorem~\ref{thm:new-relation}.
 
\subsection{Proof of theorem \ref{thm:1point}}

According to theorem~\ref{thm:4points}, generically, the data $(t_1,\ldots,d_4)$ determines finitely many linear equivalence classes of self-maps that realize these values. In the ring of polynomial functions on $(\mathbb{C}^2)^3$ invariant under the action of $S_3$, consider the polynomial~$p=u_1+u_2+u_3$, together with the polynomial~$h_p$ associated to~$p$ via theorem~\ref{thm:new-relation}.  We have calculated~$h_p|_{\tau_0}$ for the particular value~$\tau_0$ of~$(t_1,\ldots,d_4)$ studied in section~\ref{sec:case-study}. The result appears in formula~(\ref{for:test:hp}). This polynomial has four different roots, and thus~$h_p$ has four different roots for almost every choice of $t_1,\ldots,d_4$. 
This implies that the self-map that realizes the extended data $(t_1,\ldots,d_4,p)$ is, up to linear equivalence, unique.

\subsection{Algebraic independence of the new relations} 
 
The first part of theorem~\ref{thm:new-relation} gives some algebraic relations among the multipliers at the fixed points of a rational map. In this section we will prove the second part of theorem~\ref{thm:new-relation}, that affirms that some of these relations are  algebraically independent from the known ones. We will also state and prove theorem~\ref{thm:generate}, which will make precise a sense in which the newly obtained relations, together with the known ones, generate all of the relations.

\subsubsection{Symmetric functions} We begin by recalling some facts about multisymmetric functions. Let~$\mathbb{K}$ be a field. Let~$S_3$ be the group of permutations in three symbols. The polynomials $e_{i,j}$ in~$\mathbb{K}[z_1,w_1,\ldots,z_3,w_3]$ defined by
\[\prod_{k=1}^3 (1+xz_k+yw_k) =\sum_{i,j} e_{i,j}(z_1,w_1,\ldots,z_3,w_3)x^iy^j\]
are invariant under the action of~$S_3$ in~$(\mathbb{K}^2)^3$ and are called \emph{elementary symmetric polynomials} (there are nine nonconstant ones). They generate, although not freely, the ring~$\mathbb{K}[z_1,w_1,z_2,w_2,z_3,w_3]^{S_3}$ of polynomials in~$(\mathbb{K}^2)^3$ that are invariant under the action of~$S_3$. The quotient of~$(\mathbb{K}^2)^3$ under the action of~$S_3$ is rational. Its field of rational functions is freely generated by the functions  
\begin{equation}\label{matt-gen} e_{1,0}, e_{2,0}, e_{3,0}, e_{0,1}, e_{1,1}, e_{2,1}.\end{equation}
This is a particular instance of \emph{Mattuck's theorem}~\cite{mattuck}; see the discussion in~\cite[chapter~4, section~2]{GKZ}. According to Mattuck's theorem, the other elementary symmetric polynomials may be expressed as rational functions of the polynomials~(\ref{matt-gen}).  Explicitly, for the discriminant 
\[\Delta=27e_{3,0}^2-e_{2,0}^2e_{1,0}^2+4e_{2,0}^3+4e_{3,0}e_{1,0}^3-18e_{3,0}e_{2,0}e_{1,0},\]
we have 
\begin{multline}\label{mattuck1}\Delta e_{1,2} = -e_{3,0}e_{2,0}e_{1,1}e_{1,0}e_{0,1}-e_{2,1}e_{2,0}e_{1,1}e_{1,0}^2-3e_{3,0}e_{2,0}e_{1,1}^2+2e_{3,0}e_{2,1}e_{1,0}^2e_{0,1}\\ -3e_{3,0} e_{2,1}e_{1,1}e_{1,0}   -6e_{3,0}e_{2,1}e_{2,0}e_{0,1} +e_{3,0}e_{1,1}^2e_{1,0}^2 +e_{2,1}^2e_{1,0}^3+9e_{3,0}e_{2,1}^2 \\+9e_{3,0}^2e_{1,1}e_{0,1}  +4e_{2,1}e_{2,0}^2e_{1,1}-4e_{2,1}^2e_{2,0}e_{1,0}+e_{3,0}e_{2,0}^2e_{0,1}^2-3e_{3,0}^2e_{1,0}e_{0,1}^2,
\end{multline}
\begin{multline} \Delta e_{0,2}=-e_{2,1}e_{2,0}e_{1,1}e_{1,0}-e_{2,0}^2e_{1,1}e_{1,0}e_{0,1}+ 4e_{3,0}e_{1,1}e_{1,0}^2e_{0,1}  -4e_{3,0}e_{2,0}e_{1,0}e_{0,1}^2 \\-3e_{3,0}e_{2,0}e_{1,1}e_{0,1}-6e_{3,0}e_{2,1}e_{1,0}e_{0,1}  -3e_{2,1}^2e_{2,0}+e_{2,0}^2e_{1,1}^2   +e_{2,0}^3e_{0,1}^2 +e_{2,1}^2e_{1,0}^2 \\   +2e_{2,1}e_{2,0}^2e_{0,1}+9e_{3,0}^2e_{0,1}^2 +9e_{3,0}e_{2,1}e_{1,1}-3e_{3,0}e_{1,1}^2e_{1,0}, \end{multline}
\begin{multline}\label{mattuck3}\Delta e_{0,3}=-e_{2,1}^2e_{1,1}e_{1,0}+e_{2,1}e_{2,0}e_{1,1}^2+e_{2,1}e_{2,0}^2e_{0,1}^2+e_{2,1}^2e_{1,0}^2e_{0,1} -2e_{2,1}^2e_{2,0}e_{0,1}\\  -e_{3,0}e_{2,0}e_{1,1}e_{0,1}^2+e_{3,0}e_{1,1}^2e_{1,0}e_{0,1}  -e_{3,0}e_{1,1}^3  +e_{3,0}^2e_{0,1}^3 -e_{2,1}e_{2,0}e_{1,1}e_{1,0}e_{0,1}\\ +e_{2,1}^3 -2e_{3,0}e_{2,1}e_{1,0}e_{0,1}^2+3e_{3,0}e_{2,1}e_{1,1}e_{0,1}.\end{multline}

\subsubsection{Generating the relations}\label{sec:generating-relations}

Let~$F=(\mathbb{C}^2)^3/S_3$.  Let~$\Phi:E^{(4)}\to F$ be given, for~$f\in E^{(4)}$, by evaluating each one of the elementary symmetric polynomials at the values~$(u_5,v_5,u_6,v_6,u_7,v_7)$ of~$f$. Together with the map~$\Psi$ of section~\ref{sec:mapPsi}, the map~$\Psi\times\Phi:E^{(4)}\to\mathbb{C}^8\times F$ gives all the spectral data. Let~$M\subset \mathbb{C}^8\times F$ be the Zariski closure of the image of~$\Psi\times\Phi$, the \emph{variety of realizable eigenvalues}. The ring  of regular functions on~$\mathbb{C}^8\times F$ contains the ideal~$J_M$ of functions that vanish at~$M$.  We would like to give an explicit list of generators of this ideal, but we will prove a weak version of this.

For an affine variety~$V$, we will denote by~$\mathcal{O}(V)$ its ring of regular (polynomial) functions and by~$\mathcal{M}(V)$ its field of rational ones. We have that~$\mathcal{O}( \mathbb{C}^8\times F)\approx \mathcal{O}( \mathbb{C}^8)\otimes_\mathbb{C}\mathcal{O}(F)$, and we have the natural embedding
\[\mathcal{O}( \mathbb{C}^8)\otimes_\mathbb{C}\mathcal{O}(F)\stackrel{i}{\hookrightarrow}\mathcal{M}( \mathbb{C}^8)\otimes_\mathbb{C}\mathcal{O}(F).\]
We will describe the ideal generated by the  image of~$J_M$ in the ring~$\mathcal{M} (\mathbb{C}^8) \otimes_\mathbb{C} \mathcal{O}(F)$.  The latter is the ring of regular functions of the variety~$(\mathcal{M}(\mathbb{C}^8)^2)^3/S_3$  defined over the field~$\mathcal{M}(\mathbb{C}^8)$. Its points are the rational functions~$r:\mathbb{C}^8\dashrightarrow (\mathbb{C}^2)^3/S_3$. The algebraic extensions of~$\mathcal{M}(\mathbb{C}^8)$ correspond to ramified covers of~$\mathbb{C}^8$.

Consider the following elements of~$\mathcal{M}( \mathbb{C}^8)$:
\[\beta_0   =   1-\sum_{i=1}^4\frac{1}{d_i} ,\quad  \beta_1   = 3 -\sum_{i=1}^4\frac{t_i}{d_i}, \quad \beta_2   = 9-\sum_{i=1}^4\frac{t_i^2}{d_i}.\]
A system of equations equivalent to~(\ref{knowneq0})--(\ref{rel-CS}) is given by
\begin{align}\label{rel:CS} 1-\frac{e_{2,1}}{e_{3,0}} & =   1-\frac{v_5u_6u_7+u_5v_6u_7+u_5u_6v_7}{u_5u_6u_7}=\left(1-\sum_{i=5}^7\frac{v_i}{u_i}\right)=0,  \\ \label{rel:2D} 1-\frac{e_{2,0}}{e_{3,0}} & =    1-\frac{u_5u_6+u_6u_7+u_7u_5}{u_5u_6u_7}=\left(1-\sum_{i=5}^7\frac{1}{u_i}\right)=0, \\
\label{eq:beta2}\beta_2-\frac{e_{1,2}}{e_{0,3}} & =   \beta_2-\sum_{j=5}^7\frac{u_j}{v_j}=\left(16-\sum_{i=1}^7\frac{(u_i+v_i)^2}{u_iv_i}\right)-\left(1-\sum_{j=5}^7\frac{v_j}{u_j} \right)=0, \\
\beta_1-\frac{e_{0,2}}{e_{0,3}} & =  \beta_1-\sum_{j=5}^7\frac{1}{v_j}= \left(4-\sum_{i=1}^7\frac{u_i+v_i}{u_iv_i}\right)-\left(1-\sum_{j=5}^7\frac{1}{u_j} \right)=0,\end{align}
\begin{multline} \label{eq:beta0}\beta_0-\frac{e_{1,1}^2-e_{2,0}e_{0,2}-e_{2,1}e_{0,1}-e_{1,2}e_{1,0}+e_{2,0}e_{0,1}^2+e_{0,2}e_{1,0}^2-e_{1,1}e_{1,0}e_{0,1}}{3e_{0,3}e_{3,0}}= \\ =  \beta_0-\left(\frac{1}{u_5v_5}+\frac{1}{u_6v_6}+\frac{1}{u_7v_7} \right)= \left(1-\sum_{i=1}^7\frac{1}{u_iv_i}\right)=0 .\end{multline}
After clearing denominators (multiplying by~$e_{3,0}$, $e_{0,3}$ or~$e_{3,0}e_{0,3}$), these give elements~$g_1,\ldots,g_5$ of~$\mathcal{M}(\mathbb{C}^8)\otimes_\mathbb{C}\mathcal{O}(F)$.

\begin{theorem}\label{thm:generate} Let~$I$ be the ideal in~$\mathcal{M}( \mathbb{C}^8)\otimes_\mathbb{C} \mathcal{O}(F)$ generated by~$g_1,\ldots,g_5$ and by the polynomials~$h_p$, $h_q$, $h_r$ and~$h_s$ associated by theorem~\ref{thm:new-relation} to 
\begin{equation}\label{for:pqrs} p   =   e_{1,0},\quad  q   =  e_{3,0}, \quad  r  =   e_{0,1},\quad   s  =  e_{1,1}.\end{equation}
Then the ideal generated by~$ i(J_M)$ in $\mathcal{M}(\mathbb{C}^8)\otimes_\mathbb{C} \mathcal{O}(F)$ coincides with $I$. 
\end{theorem}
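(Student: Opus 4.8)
The plan is to exploit the description of the generic fiber of $\Psi$ provided by Theorem~\ref{thm:4points}, reinterpreted over the function field $\mathcal{M}(\mathbb{C}^8)$. First I would pass to the field of fractions. Set $K=\mathcal{M}(\mathbb{C}^8)=\mathbb{C}(t_1,d_1,\ldots,t_4,d_4)$ and let $L$ be the field $\mathcal{M}(M)$ of rational functions on the variety $M$ of realizable eigenvalues. Since $\Psi\times\Phi$ factors through $\Psi$ whose generic fiber (after the $\operatorname{Aut}(\mathbb{P}^2,\ell)$-quotient) is finite of cardinality four, the projection $M\to\mathbb{C}^8$ is generically finite, so $L/K$ is a finite field extension; in fact Theorem~\ref{thm:4points} shows $[L:K]=4$ (one checks, using the computation in section~\ref{sec:case-study}, that the four self-maps in a generic fiber give four distinct points of $M$, which is exactly the content of the argument already used for Theorem~\ref{thm:1point}). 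The coordinate ring $\mathcal{M}(\mathbb{C}^8)\otimes_\mathbb{C}\mathcal{O}(F)$ is the coordinate ring of the $K$-variety $F_K:=(K^2)^3/\!/S_3$, and $i(J_M)$ generates the ideal $J$ of the image of $M$ in $F_K$; so $\mathcal{O}(F_K)/J\cong L$ as a $K$-algebra. The theorem amounts to the statement that the ideal $I$ generated by $g_1,\ldots,g_5,h_p,h_q,h_r,h_s$ equals $J$.

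The inclusion $I\subseteq J$ is the content of the first part of Theorem~\ref{thm:new-relation} together with the identities~(\ref{rel:CS})--(\ref{eq:beta0}): each generator of $I$ vanishes identically on $M$, hence lies in $i(J_M)$ and a fortiori in $J$. For the reverse inclusion $J\subseteq I$, the strategy is to show that $\mathcal{O}(F_K)/I$ is already reduced and of the right dimension, namely that it is a field isomorphic (over $K$) to $L$; since the surjection $\mathcal{O}(F_K)/I\twoheadrightarrow \mathcal{O}(F_K)/J\cong L$ is then forced to be an isomorphism, we get $I=J$. Concretely, by Mattuck's theorem the ring $\mathcal{O}(F_K)$ is the polynomial ring $K[e_{1,0},e_{2,0},e_{3,0},e_{0,1},e_{1,1},e_{2,1}]$ in six free generators. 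The five equations~(\ref{rel:CS})--(\ref{eq:beta0}), after clearing denominators, express $e_{2,0}$, $e_{2,1}$, $e_{0,2}$, $e_{1,2}$ (equivalently, via the Mattuck relations~(\ref{mattuck1})--(\ref{mattuck3}), one more non-free generator), and one further combination in terms of the remaining ones — in effect they allow one to eliminate, say, $e_{2,0}$ and $e_{2,1}$, leaving a $K$-algebra generated by the four elements $p=e_{1,0}$, $q=e_{3,0}$, $r=e_{0,1}$, $s=e_{1,1}$. Modulo $g_1,\ldots,g_5$ the quotient ring becomes $K[p,q,r,s]/(\bar h_p,\bar h_q,\bar h_r,\bar h_s)$, where $\bar h_p$ etc.\ are the images of the degree-$\le 4$ polynomials from Theorem~\ref{thm:new-relation}.

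It then remains to see that $K[p,q,r,s]/(h_p,h_q,h_r,h_s)$ is a field equal to $L$. Here is where the explicit sample computation is decisive: for the value $\tau_0$ of $(t_1,\ldots,d_4)$ studied in section~\ref{sec:case-study}, one has $h_p|_{\tau_0}$ (formula~(\ref{for:test:hp})) with four distinct roots, so $h_p$ is a separable degree-four polynomial over $K$, generating a degree-four extension $K(\bar p)\subseteq L$; by degree count $L=K(\bar p)$, and $h_p$ is (up to a unit) the minimal polynomial of $\bar p=\psi_p$ over $K$. One then checks that each of $h_q$, $h_r$, $h_s$ is, again up to a unit in $K[\bar p]$, the relation expressing $\bar q,\bar r,\bar s\in L=K(\bar p)$ as polynomials in $\bar p$ — equivalently that $h_q(q),h_r(r),h_s(s)$ cut $q,r,s$ down to their correct values inside $L$. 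Since the four variables $p,q,r,s$ already generate $\mathcal{O}(F_K)/(g_1,\ldots,g_5)$ and $(h_p,h_q,h_r,h_s)$ forces this quotient to be $K(\bar p)\cong L$ with no residual nilpotents (separability of $h_p$), the surjection onto $L$ is an isomorphism and $I=J$, which is the assertion of the theorem.

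I expect the main obstacle to be the second paragraph's elimination step made rigorous: one must verify that, working in $\mathcal{M}(\mathbb{C}^8)\otimes_\mathbb{C}\mathcal{O}(F)$ rather than over a field, clearing the denominators $e_{3,0},e_{0,3}$ in~(\ref{rel:CS})--(\ref{eq:beta0}) does not lose information and that $e_{2,0},e_{2,1}$ (and the $e_{i,j}$ eliminated via~(\ref{mattuck1})--(\ref{mattuck3})) can genuinely be solved for inside the ideal $I$ — i.e.\ that $I$ contains elements of the form $e_{2,0}-(\text{polynomial in }p,q,r,s)$ and likewise for $e_{2,1}$. This is a Gröbner-basis verification analogous to the one flagged for Theorem~\ref{thm:4points}, and the separability input from~(\ref{for:test:hp}) is what guarantees the final quotient is reduced; everything else is bookkeeping with Vieta's formulas and the Mattuck relations.
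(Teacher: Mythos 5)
There is a genuine gap in the middle of your argument, at the elimination step. You assert that modulo $g_1,\ldots,g_5$ the ring $\mathcal{O}(F_K)$ collapses to $K[p,q,r,s]$ and hence that $\mathcal{O}(F_K)/I\cong K[p,q,r,s]/(h_p,h_q,h_r,h_s)$. But only two of the five known relations, namely~(\ref{rel:CS}) and~(\ref{rel:2D}), are of the form ``non-free generator equals polynomial in $p,q,r,s$'' (they give $e_{2,1}=e_{3,0}$ and $e_{2,0}=e_{3,0}$). The remaining three, after substituting the Mattuck expressions~(\ref{mattuck1})--(\ref{mattuck3}) for $e_{1,2},e_{0,2},e_{0,3}$, become nontrivial polynomial relations $R_0,R_1,R_2$ among $p,q,r,s$ themselves (displayed in~(\ref{redrel0})--(\ref{redrel2}) in the paper); they eliminate nothing and must survive as generators of the quotient ideal. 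Dropping them is fatal: each $h_\sigma$ is by construction a \emph{univariate} polynomial of degree four in the single variable $\sigma$, so $K[p,q,r,s]/(h_p,h_q,h_r,h_s)$ is a $K$-algebra of dimension $4^4=256$, not the degree-four field $L$. For the same reason your claim that $h_q$, $h_r$, $h_s$ ``express $\bar q,\bar r,\bar s$ as polynomials in $\bar p$'' cannot be right: $h_q$ involves only $q$, so it constrains $q$ to four values but says nothing about how $q$ is tied to $p$.

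What actually closes the argument in the paper is precisely the interplay you discarded: at the sample point $\tau_0$ of section~\ref{sec:case-study} one checks explicitly that of the $256$ common zeros of $h_p,h_q,h_r,h_s$ in the $4$-plane $P$, exactly four satisfy $R_0,R_1,R_2$, each with multiplicity one, and these four are the images of the four self-maps of theorem~\ref{thm:4points}; a local-to-global (monodromy/ramified cover) argument then shows the ideal generated by all nine elements cuts out exactly the four sections over a Zariski-open set of $\mathbb{C}^8$, with no embedded or extra components over the function field, whence $I$ coincides with the ideal generated by $i(J_M)$. Your framing via the field extension $L/K$ of degree four and the separability of $h_p|_{\tau_0}$ is sound as far as it goes (it is essentially the proof of theorem~\ref{thm:1point}), but to repair the proof you must keep $R_0,R_1,R_2$ in the ideal and supply the computation that they, together with the four $h$'s, define a reduced zero-dimensional scheme of length four over $K$.
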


\begin{proof}
Consider the projection~$\mu:F\to \mathbb{C}^6$ given by the functions in~(\ref{matt-gen}). According to Mattuck's theorem, it is a birational equivalence. By~(\ref{mattuck1})--(\ref{mattuck3}), it is a biholomorphism onto its image in restriction to the complement of the zero locus of~$\Delta$. By composing~$\Phi$ with~$\mu$ we have the map~$\Phi^\flat:E^{(4)}\to \mathbb{C}^6$; it takes values in the 4-plane~$P$ given by equations~(\ref{rel:CS})--(\ref{rel:2D}),
\begin{equation}\label{for:basic} e_{2,1}=e_{3,0},\quad e_{2,0}=e_{3,0}. \end{equation}
Consider the coordinates in~$P$ given by the restrictions of~$p$, $q$, $r$ and~$s$~(\ref{for:pqrs}). We will continue to denote each one of these by the same letter. Substituting~(\ref{mattuck1})--(\ref{mattuck3}) into~(\ref{eq:beta2})--(\ref{eq:beta0}) and then substituting~(\ref{for:basic}), relations~(\ref{eq:beta2})--(\ref{eq:beta0}) become the relations~$R_0$, $R_1$ and~$R_2$ given respectively by
\begin{multline}\label{redrel0}
4sp^2-4p^3r-3qs-12spr-qspr -4s^2p-5qpr^2+\\
qsr -qsp+qp^2 r+10qpr+4sp^2r+ 9s^2+qs^2+\\ q^2r^2 +q^2 -3qp+4p^2r^2+6qr^2-9qr-2q^2r-\beta_0 \Theta=0,\end{multline}
\begin{multline} q^2r^2+qp^2+qs^2-3s^2p+9qs+2q^2r- 3q^2+9qr^2+\\ 4sp^2r-4qpr^2-3qsr-
6qpr-qsp-qspr-\beta_1 \Theta=0,\end{multline}
\begin{multline}\label{redrel2} q^2r^2-qspr-qsp^2-3qs^2+ 4q^2s+s^2p^2-3qsp+ qp^3+\\9qsr+9q^2 -3qp r^2-6q^2r+2qp^2r-4q^2p-\beta_2 \Theta=0,\end{multline}
for  
\begin{multline*}\Theta=q^2+qs^2+q^2r^2+qp^2r-qsr^2 -2qpr^2+\\ 3qsr-s^3+s^2pr-qspr+ qr^3-qsp-2q^2r.\end{multline*}

The map~$\Psi\times\Phi^\flat$ takes values in~$\mathbb{C}^8\times P$. Let~$\pi:\mathbb{C}^8\times P\to \mathbb{C}^8$ be the projection onto the first factor.
We shall study the map~$\Psi\times\Phi^\flat$ in the neighborhood of some generic point. For this purpose we will use the  data analyzed in section~\ref{sec:case-study}.
Let~$\tau_0$ be the point in~$\mathbb{C}^8$ corresponding to the values of~$(t_i,d_i)$ of the map $f$ (given by~(\ref{values-case-study})) map~$f$ in~(\ref{map-case-study}), given by~(\ref{values-case-study}), and let~$P_0=\pi^{-1}(\tau_0)$. Each one of the four polynomials~$h_p,\ldots,h_s$ associated to  $\tau_0$, whose explicit expression is given in (\ref{for:test:hp})--(\ref{for:test:hs}), has a nonvanishing discriminant. These four polynomials vanish simultaneously at~$4^4=256$ different points in~$P_0$, with multiplicity one at each of them. We may calculate how many of these~256 points satisfy relations~(\ref{redrel0})--(\ref{redrel2}) for the values 
\[\beta_0 = \frac{59}{15}, \qquad
  \beta_1 = -\frac{49}{15}, \qquad
  \beta_2 = \frac{29}{15},\]
coming from the values~(\ref{values-case-study}) of $\tau_0$. An explicit calculation shows that exactly four of them do: the variety defined by the equations~(\ref{redrel0})--(\ref{redrel2}) and~(\ref{for:test:hp})--(\ref{for:test:hs}) consists of four points in~$P_0$, each one of them with multiplicity one. These four points are exactly the ones in the image of~$\Psi\times\Phi^\flat$ within~$P_0$ (for these four points are in the image and satisfy all the relations).

Consider now a sufficiently small ball~$U\subset\mathbb{C}^8$ centered at~$\tau_0$. The set of common solutions to the polynomials~$h_p$, $h_q$, $h_r$ and~$h_s$ in~$\pi^{-1}(U)$ is given by~$256$ copies of~$U$. Of these, in the~$252$ where the corresponding point of~$P_0$ does not satisfy all the relations~(\ref{redrel0})--(\ref{redrel2}), no point satisfies all of these relations. The four remaining preimages of~$U$ are in~$M$, and satisfy the relations~(\ref{redrel0})--(\ref{redrel2}). They correspond to the four preimages of~$\pi$ given by theorem~\ref{thm:4points}. We have the same situation in~$\mathbb{C}^8\times F$, since for a generic~$\tau\in\mathbb{C}^8$, the four points of~$P$ in~$\pi^{-1}(\tau)$ are not in the image under~$\mu$ of the discriminant locus given by~$\Delta$.

The global picture in~$\mathbb{C}^8\times F$ is the following one. There is a Zariski open subset~$W\subset\mathbb{C}^8$ such that, in~$\pi^{-1}(W)\subset \mathbb{C}^8\times F$, $g_1,\ldots,g_5$ and~$h_p$, $h_q$, $h_r$, $h_s$ do not have poles, and where the variety~$W'$ of common solutions to these equations gives a fourfold cover~$\pi|_{W'}:W'\to W$. This cover has a holonomy group and, associated to it there is a   ramified cover~$\nu:B\to \mathbb{C}^8$. After pullback, within~$B\times F$, there are four rational maps~$s_i:B \dashrightarrow   F$, $i=1,\ldots,4$, that give sections of~$\pi$ which project onto~$W'$. The ramified cover~$\nu$ induces a field extension~$\mathcal{M}(B)/\mathcal{M}(\mathbb{C}^8)$ and thus an embedding 
$\mathcal{M}(\mathbb{C}^8)\otimes_\mathbb{C}\mathcal{O}(F)\hookrightarrow \mathcal{M}(B)\otimes_\mathbb{C}\mathcal{O}(F)$.
The variety defined by~$I$ in~$\mathcal{M}(B)\otimes_\mathbb{C}\mathcal{O}(F)$ is given by the four points~$s_i$, and does not have further points in any extension of~$\mathcal{M}(B)$. 

Let~$g\in J_M\subset \mathcal{O}( \mathbb{C}^8)\otimes_\mathbb{C}\mathcal{O}(F)$, let~$i(g)\in \mathcal{M}( \mathbb{C}^8)\otimes_\mathbb{C}\mathcal{O}(F)$ considered as an element of~$\mathcal{M}( B)\otimes_\mathbb{C}\mathcal{O}(F)$.  The variety it defines in~$B\times F$ contains the graphs of~$s_i$ for~$i=1,\ldots, 4$.\end{proof}

\begin{remark} Our result does not say that the ideal of~$M$ in~$\mathcal{O}( \mathbb{C}^8)\otimes_\mathbb{C} \mathcal{O}(F)$ is the one generated by~$g_1,\ldots,g_5$ and by the polynomials~$h_p$, $h_q$, $h_r$ and~$h_s$. It does imply that any irreducible component of the variety defined by the latter that is not contained in~$M$ projects via~$\Pi$ into a nowhere dense Zariski closed subset of~$\mathbb{C}^8$.
\end{remark}

\subsubsection{The independence of the new relations}
Let us now prove that the relation of theorem~\ref{thm:new-relation} corresponding to~$p$ is independent from relations~(\ref{knowneq0})--(\ref{rel-CS}).\footnote{The program~\texttt{indep.sage} contains the details and computations of this part.}

Fix~$\beta_0$, $\beta_1$ and~$\beta_2$. In~$\mathbb{C}[p,q,r,s]$, let~$J_\beta$ be the ideal generated by the polynomials~$R_0$, $R_1$ and~$R_2$ defined in~(\ref{redrel0})--(\ref{redrel2}) and let~$V$ be the associated variety.  Let~$W$ be the variety defined by
\[dp\wedge dR_0\wedge dR_1\wedge dR_2= 0.\]
Suppose that~$V\setminus W\neq\varnothing$ and let~$z\in V\setminus W$. Since~$z\notin W$,  $dR_0\wedge dR_1\wedge dR_2$ does not vanish at~$z$, and the function~$(R_0,R_1,R_2)$ is a submersion in a neighborhood of it. In particular, $V$ is smooth at~$z$. Furthermore, since~$dp\wedge dR_0\wedge dR_1\wedge dR_2$ does not vanish at~$z$,  $p$ is a local coordinate for~$V$ in a neighborhood of~$z$. Relations~(\ref{knowneq0})--(\ref{rel-CS}) are satisfied while~$p$ attains infinitely many values (equivalently, the image of~$V$ under~$p$ is a curve). Since the relation of theorem~\ref{thm:new-relation} corresponding to~$p$ constrains~$p$ to have at most four values, this relation is independent of the previous ones.  It is thus sufficient to prove that such~$z$ actually exists, that there exist~$\beta_0$, $\beta_1$ and~$\beta_2$ such that~$V\setminus W\neq\varnothing$, this is, such that~$V$ is not contained in~$W$. On its turn this is equivalent to the fact that the generator of the ideal of~$W$,
\[\det\left(\frac{\partial(p,R_0,R_1,R_2)}{\partial(p,q,r,s)}\right),\]
is not contained in~$J_\beta$.  This indeed happens for~$(\beta_0,\beta_1,\beta_2)=(1,0,0)$. This proves that, in~$\mathbb{C}^8\times P$, $h_p$ is independent from  relations~(\ref{redrel0})--(\ref{redrel2}) and this implies that, in~$\mathbb{C}^8\times P$, $h_p$ is independent from the known relations.

In the same way we prove that the relations of  theorem~\ref{thm:new-relation} corresponding to~$q$, $r$ and~$s$ are independent from the known ones (the same values of~$\beta_i$ allow  to conclude).

\subsection{A case study}\label{sec:case-study}

Let us consider a particular example and analyze it in detail\footnote{All the computations regarding this example can be found in the ancillary file \texttt{case-study.ipynb}.}. Let $f$ be the map defined by the rule
\begin{equation}  \label{map-case-study}
  [z_1:z_2:z_3] \longmapsto [z_1(z_1 + 4z_2 + 2z_3) : 2z_1z_2 + 3z_2^2 : 4z_1z_3 + 5z_2z_3 - z_3^2] 
\end{equation}
together with its invariant line~$\{z_1=0\}$. The fixed points of $f$ away from the invariant line are at $p_1=[1:0:0]$, $p_2=[1:1:0]$, $p_3=[1:0:1]$ and~$p_4=[5:-3:4]$. The trace and determinant of  $\mathbf{I}-Df|_{p_i}$ are:
\begin{equation} \label{values-case-study}
\begin{aligned}
  t_1 &=  -4, & d_1 &=  3,\\
  t_2 &=  -3/5, & d_2 &=  -4/25,\\
  t_3 &=  4/3, & d_3 &=  1/3,\\
  t_4 &=   9, & d_4 &=  -60.
\end{aligned}
\end{equation}
 
We have computed the rank of the derivative of the map $\Psi$ (which appears in section~\ref{sec:mapPsi}) at $f$ and verified that it is not zero. We have also used the algorithm described in section~\ref{sec:test} to confirm that there exist exactly four self-maps that realize the values $t_i,d_i$ given above, and computed their explicit expressions (which we omit here).

On the invariant line $z_1=0$, the fixed points are $p_5 = [0:1:0]$, $p_6 = [0:1:2]$, and $p_7 = [0:0:1]$. 
The eigenvalues $(u_i,v_i)$ of $\mathbf{I}-Df|_{p_i}$ (where $u_i$ is tangent to the line $z_1=0$) are given by:
\begin{align*}
u_5 &= -2/3, & v_5 &= -1/3, \\
u_6 &= 2/3, & v_6 &= -5/3, \\
u_7 &= 1, & v_7 &= 3.
\end{align*}
These define the following values for the four symmetric functions introduced in the statement of theorem~\ref{thm:new-relation}:
\[
  e_{1,0} =  1, \quad e_{3,0} =  -4/9, \quad e_{0,1} =  1, \quad e_{1,1} = -10/9.
\]

We have applied the algorithm in section~\ref{sec:computing-h} to compute the polynomials $h_\sigma$ that appear in the statement of theorem~\ref{thm:new-relation} for each of the functions $p=e_{1,0}$, $q=e_{3,0}$, $r=e_{0,1}$ and $s=e_{1,1}$. Let~$\tau_0\in\mathbb{C}^8$ be the point given by the values~$(t_1,d_1,\ldots,t_4,d_4)$ defined by~(\ref{values-case-study}). The polynomials $h_\sigma\vert_{\tau_0}$  are:
\begin{center}
\begin{flalign}\label{for:test:hp} \nonumber 
h_p\vert_{\tau_0} &= 3304458636693875651644773p^4-23379088345478790415995340p^3 &\\ \nonumber 
&+ 49551679403386908799808694p^2-39615490609470050079352860p &\\ 
&+ 10138440914868056043894733, &
\end{flalign}
\begin{flalign}   \nonumber 
h_q\vert_{\tau_0} &= 802983448716611783349679839q^4+6057061832873045850450465888q^3 &\\ \nonumber 
&+ 8220892437890168859863219744q^2+1049813648791026016967518720q &\\
&- 656868792374273004661408000, &
\end{flalign}
\begin{flalign}   \nonumber 
h_r\vert_{\tau_0} &= 3304458636693875651644773r^4+236886089402261035384796r^3 &\\ \nonumber 
&- 2361480475447712921087794r^2-1054408391045835895722692r &\\ 
&- 125455859602587870219083, &
\end{flalign}
\begin{flalign}\label{for:test:hs} \nonumber 
h_s\vert_{\tau_0} &= 802983448716611783349679839s^4-255708691033085259944247216s^3 &\\ \nonumber 
& -1889198858093497242051156664s^2-605423311222765579888256320s &\\ 
&+ 85011752062106601163535600. &
\end{flalign}
\end{center}
It is straightforward to verify that none of the above polynomials have repeated roots, which concludes the proof of theorem~\ref{thm:1point}.

\section{The test and effective computations}

In this section we describe two procedures.  The first one tests if a given collection of fourteen numbers $u_1,v_1,\ldots,u_7,v_7$ is realizable as the collection of multipliers of a self-map. It is relatively independent of our previous results and will be described in section~\ref{sec:test}.
The second procedure takes as input a rational function $\sigma(u_5,v_5,\ldots,u_7,v_7)$ invariant under the action of $S_3$ and values for $t_1,d_1,\ldots,t_4,d_4$ and produces the evaluation of polynomial~$h_\sigma$  that appears in theorem~\ref{thm:new-relation} at~$t_1,d_1,\ldots,t_4,d_4$. We will describe it in section~\ref{sec:computing-h}. Both procedures have been implemented as effective algorithms in the case when the input variables are assigned rational values.
\footnote{The code for these algorithms can be found in the programs \texttt{Test.sage} and \texttt{compute-h} (the last one available as a~\texttt{.sage} script and as an interactive~\texttt{.ipynb} notebook).}

\subsection{Theoretical formulation of the test}\label{sec:test}

Let~$F=(\mathbb{C}^2)^3/S_3$ together with its embedding into~$\mathbb{C}^9$ given by the nine elementary symmetric polynomials~$e_{10}$, $e_{20}$, $e_{30}$, $e_{01}$, $e_{11}$, $e_{21}$, $e_{01}$, $e_{02}$, $e_{03}$. Let~$\widetilde{\Psi}:E^{(4)}\to \mathbb{C}^8\times \mathbb{C}^9$ be given by
\[f\mapsto(t_1,d_1,\ldots,t_4,d_4,e_{10},\ldots,e_{03}).\]
It takes values in~$\mathbb{C}^8\times F$. 
As in section~\ref{sec:mapPsi}, the map $\widetilde{\Psi}$ factors through the quotient $E^{(4)} \sslash \operatorname{Aut}(\mathbb{P}^2,\ell)$, so we may replace $E^{(4)}$ by $E^{(4)} \sslash \operatorname{Aut}(\mathbb{P}^2,\ell)$. This can be done  by restricting to the elements of~$E^{(4)}$ that have~$z_1=0$ as invariant line and that have fixed points at~$[1:0:0]$, $[1:0:1]$ and~$[1:1:0]$. The variety of these is a Zariski-open subset of~$\mathbb{C}^8$, and   can be given parameters~$(w_1,\ldots,w_8)$. Such a parametrization allows us to consider~$\widetilde{\Psi}$ as a map~$\widetilde{\Psi}:\mathbb{C}^8\to \mathbb{C}^{8+9}$. It is rational and may be explicitly given. Let~$V\subset E^{4}\times \mathbb{C}^{17}$ be the Zariski closure of its graph.  Each coordinate~$g$ of~$\widetilde{\Psi}$ is a rational function of~$w_1,\ldots, w_8$, $g=p(w)/q(w)$ and  gives the polynomial~$q(w)g-p(q)$ in
\[\mathbb{C}[t_1,d_1,\ldots,t_4,d_4,e_{10},\ldots,e_{03},w_1,\ldots,w_8],\]
whose coefficients belong to~$\mathbb{Q}$. The ideal generated by these polynomials contains the ideal of the variety~$V$, and defines a variety~$W\subset E^{(4)}\times \mathbb{C}^{17}$ containing~$V$.
It contains the points in~$V$ which are  realizable by a self-map, and may also contain some irreducible components coming from indeterminacy points of the extension of~$\widetilde{\Psi}$ to the closure of~$E^{(4)}$ within the corresponding variety of marked self-maps.

Choosing concrete rational values for~$u_i,v_i$ determines rational values for~$t_1$, $d_1$, \ldots, $t_4$, $d_4$, $e_{10}$, \ldots, $e_{03}$.   This amounts to choosing a ``horizontal'' plane $\Pi$ in $ E^{(4)} \times \mathbb{C}^{17}$.  After evaluation on these concrete values, the polynomials defining~$W$ become polynomials in~$\mathbb{Q}[w_1,\ldots,w_8]$. The existence of points in~$W\cap \Pi$ is equivalent to the existence of common  solutions (in~$\overline{\mathbb{Q}}$) to these  polynomial equations with rational coefficients. Although they may be difficult to solve, Gr\"obner bases algorithms may be used to determine if these equations have common solutions in~$\overline{\mathbb{Q}}$. The first part of our test is to determine if~$W\cap \Pi$ is empty (in which case the data is not realizable), or not. If~$W\cap \Pi$ is not empty, further analysis is needed in order to discard points in the irreducible components of~$W$ that do not contain~$V$. 

As discussed in the proof of theorem~\ref{thm:4points}, the values for~$t_1$, $d_1$, \ldots, $t_4$, $d_4$ determine the position of the fourth fixed point that is not in~$\ell$, which is given by~$[A:B:C]$ for some explicit polynomials~$A$, $B$, $C$ in~$w_0,\ldots,w_8$.  Let us say that the position of the fourth point is \emph{nondegenerate} if~$A$ does not vanish or if~$A$ vanishes but~$B$ and~$C$ do so as well (recall that~$\ell$ is given by~$z_1=0$). If this condition is not satisfied, the fourth point belongs to~$\ell$, and corresponds to a degenerate self-map that is not in~$E^{(4)}$.

Finally, we shall say that the Test is \emph{passed} if~$W\cap\Pi$ is not empty and if the position of the fourth point is nondegenerate. Otherwise, we will say that the Test is \emph{not passed}. If the Test is not passed we are certain that there is no element of~$E^{(4)}$ realizing the data. If the Test is passed, then, in principle, further analysis is needed to check if there is a self-map in~$E^{(4)}$ realizing the corresponding data (the Test may give \emph{false positives} but no \emph{false negatives}).

\subsection{Computing the polynomial \texorpdfstring{$h_\sigma$}{h}}\label{sec:computing-h}
Let~$\sigma$ belong to the field of rational functions on~$(\mathbb{C}^2)^3$ invariant under the action of~$S_3$. In a similar way as above, we may construct the polynomial $h_\sigma$ whose existence is claimed in theorem~\ref{thm:new-relation}.

Let $\Psi\times\psi_\sigma\colon E^{(4)} \sslash \operatorname{Aut}(\mathbb{P}^2,\ell) \to \mathbb{C}^8\times\mathbb{C}$ be the map 
\[
\psi_\sigma\colon f\mapsto (t_1,\ldots,d_4, \sigma(u_5,\ldots,u_7)),
\]
and let $V_\sigma$ be its graph. Assume now that values for the variables $t_i,d_i$ are given, and let $\Pi$ be the plane in~$\left( E^{(4)} \sslash \operatorname{Aut}(\mathbb{P}^2,\ell) \right) \times \mathbb{C}^8\times\mathbb{C}$ where the variables $t_i,d_i$ take the given values. 
According to theorem~\ref{thm:4points}, for generic values, the intersection of $\Pi$ with the graph $V_\sigma$ consists of four points. We may project those four points to the last $\mathbb{C}$ factor to recover the four values that $\sigma$ may take, given that the values for~$t_i,d_i$ have been fixed. 
These four points in~$\mathbb{C}$ form an algebraic set whose vanishing ideal $\pi_\ast I_\Pi$ is generated precisely by the polynomial $h_\sigma$.

If the given values for $t_i,d_i$ are rational numbers, we may effectively compute the polynomial~$h_\sigma$ by computing a Gr\"{o}bner basis for $I_\Pi$ and eliminating all variables coming from $E^{(4)}$ (which geometrically corresponds to taking the projection $\pi$ onto the $\sigma$-axis).

\section{An application to the Painlev\'e analysis of some quadratic homogeneous equations}\label{sec:pain}

In this section, as an application of the previously obtained results, we will study some ordinary differential equations in the complex domain and try to determine those, within a particular class, which do not have multivalued solutions.

There is a natural correspondence between quadratic homogeneous vector fields on~$\mathbb{C}^3$ and quadratic self-maps of~$\mathbb{P}^2$. Many relevant notions for a vector field are also relevant for the associated self-map and vice versa. To a quadratic homogeneous vector field~$X=\sum_i P_i\indel{z_i}$ on~$\mathbb{C}^3$ we associate the self-map~$f:\mathbb{P}^2\to \mathbb{P}^2$
\[[z_1:z_2:z_3]\mapsto [P_1(z_1,z_2,z_3):P_2(z_1,z_2,z_3):P_3(z_1,z_2,z_3)],\] 
and from the self-map we can recover the vector field up to a constant multiple. This correspondence is equivariant with respect to the natural action of~$\mathrm{GL}(3,\mathbb{C})$ on both spaces. 

Under this correspondence, invariant lines for~$f$ give hyperplanes which are tangent to~$X$; indeterminacy points of~$f$ correspond to lines through the origin of~$\mathbb{C}^3$ along which~$X$ vanishes identically; fixed points of~$f$ correspond to \emph{nondegenerate radial orbits} of~$X$, lines through the origin of~$\mathbb{C}^3$ along which~$X$ does not vanish identically and along which it is collinear with the radial vector field~$\sum_iz_i\indel{z_i}$.  For the nondegenerate radial orbit of~$X$ corresponding to the fixed point~$p$ of~$f$, its \emph{Kowalevski exponents} are the eigenvalues of~$\mathbf{I}-Df|_p$ (see~\cite{goriely} for a direct definition). The Kowalevski exponents of a nondegenerate radial orbit give a first-order approximation of the behavior of the vector field along it.

Relations~(\ref{knowneq0})--(\ref{rel-CS}) may be interpreted within the context of vector fields and foliations. A quadratic homogeneous vector field~$X$ on~$\mathbb{C}^3$ induces naturally a foliation~$\mathcal{F}_X$  on~$\mathbb{P}^2$. Relation~(\ref{knowneq2}) follows from Baum and Bott's index theorem~\cite{Baum-Bott} for~$\mathcal{F}_X$. If~$f$ has an invariant line~$\ell$, this line is also tangent to~$\mathcal{F}_X$, and relation~(\ref{rel-CS}) can be directly obtained  from the Camacho-Sad index theorem~\cite[appendix]{Camacho-Sad}. A quadratic homogeneous vector field on~$\mathbb{C}^3$ also induces a foliation on~$\mathbb{C}^3$.  Relations~(\ref{knowneq1})--(\ref{knowneq2}) can  be obtained in a straightforward manner from  index theorems applied to   foliations related to this one (see the final comments in~\cite{guillot-pointfixe}). As far as algebraic relations go, they can be established in one context and transported to the other, but a general conceptual link between the two  is still missing (see, however, \cite{obrian}, for a direct relation between Lefschetz's  and Baum and Bott's theorems).
 
The previously established results, like  Theorem~\ref{thm:new-relation},   can be interpreted as results about the Kowalevski exponents of the quadratic homogeneous vector fields which have an invariant plane.

For ordinary differential equations in the complex domain, we may have multivalued solutions. For instance, for~$\alpha\in\mathbb{C}$, the vector field~$-x^2\indel{x}+\alpha xy\indel{y}$, corresponding to the system of equations~$x'=-x^2$, $y'=\alpha xy$,  has the solution~$(x(t),y(t))=(1/t,t^{\alpha})$, which is multivalued if~$\alpha\notin\mathbb{Z}$. Since the middle of the XIXth century, the problem of understanding the ordinary differential equations in the complex domain which do not have multivalued solutions has been a central one, with Painlev\'e's call upon a systematic and exhaustive approach~\cite{painleve} leading to many investigations. While there are many settings where this program has been successfully carried out, in other ones, like that of autonomous equations given by algebraic vector fields on affine threefolds, or even polynomial vector fields on~$\mathbb{C}^3$, we know very little. Even for the equations given by quadratic homogeneous vector fields on~$\mathbb{C}^3$, we do not have  an understanding of the ways in which such a vector field may be free of multivalued solutions  (the study of homogeneous vector fields is a key step towards  that of more general polynomial ones).

In what follows, we will use Rebelo's notion of \emph{semicompleteness}~\cite[definition~1]{guillot-rebelo} to be precise when we say that a vector field on a manifold has no multivalued solutions  when considered as an autonomous ordinary differential equation.

\begin{problem}\label{mainproblem} Classify the quadratic homogeneous vector fields on~$\mathbb{C}^3$ which are semicomplete.
\end{problem}
 
The Kowalevski exponents give an obstruction of arithmetical nature for semicompleteness: \emph{if a quadratic homogeneous vector field on~$\mathbb{C}^3$ is semicomplete and~$0$ is its only singularity, it has seven different nondegenerate radial orbits and each one of  them has two nonzero integers as Kowalevski exponents} (see~\cite[section~2.1]{guillot-semi} for a proof). Thus, in order to solve the above problem, one may start by solving the following one.

\begin{problem}  Determine the sets of seven pairs of nonzero integers that are realizable as sets of Kowalevski exponents of quadratic homogeneous vector fields on~$\mathbb{C}^3$.
\end{problem}

(The next step would be, for each vector field that realizes such a set of exponents, to investigate its semicompleteness.) The problem asks us to find integer points in an algebraic variety which, as we previously discussed, we know only partially---we only know the bigger variety defined by  equations~(\ref{knowneq0})--(\ref{knowneq2}).

The analogue problem in~$\mathbb{C}^2$ can be very satisfactorily solved. A semicomplete quadratic homogeneous vector field on~$\mathbb{C}^2$ with an isolated singularity at~$0$ has three radial orbits, each one of them with an exponent~$u_i$; these exponents are integers and are subject to relation~(\ref{rel-invline1}). The solutions to the latter are
\begin{equation}\label{numbers2d}\{(1,-m,m), (2,3,6), (2,4,4), (3,3,3)\}.\end{equation}
For each one of these there is a unique (up to linear equivalence) quadratic homogeneous vector field which is, moreover, semicomplete.

In~$\mathbb{C}^3$, despite the fact that we do not have all the relations that bind the Kowalevski exponents, we may still go a long way in the solution of problem~\ref{mainproblem}.

For~$d_i=u_iv_i$, we may write~(\ref{knowneq0}) as
\begin{equation}\label{R0}\sum_{i=1}^7 \frac{1}{d_i}=1.\end{equation}
An integer solution to this equation gives rise to finitely many integer solutions of the system~(\ref{knowneq0})--(\ref{knowneq2}): for each~$d_i$ there are finitely many couples of integers~$(u_i,v_i)$ such that~$u_iv_i=d_i$ and each one of these sets may tested for compliance with  equations~(\ref{knowneq1})--(\ref{knowneq2}). In this sense, equation~(\ref{R0}) is the master one. A solution $(d_1,\ldots,d_7)$ to equation~(\ref{R0}) is said to \emph{belong to the $n$\textsuperscript{th} family} if~$n$ is the smallest cardinality of the subsets~$J\subset\{1,\ldots,7\}$ such that~$\sum_{i\in J}1/d_i=1$.

A classification of the semicomplete quadratic homogeneous  vector fields on~$\mathbb{C}^3$ having an isolated singularity at the origin associated to the first or second families of the solutions of~(\ref{R0}) appears in~\cite{guillot-qc3} (there are infinitely many of them). The strategy for this classification is a mixed one: from a partial knowledge of the solutions to the Diophantine system~(\ref{knowneq0})--(\ref{knowneq2}) we may look for other obstructions for semicompleteness in the quadratic vector fields having the associated data as exponents; these entail other relations among the Kowalevski exponents and so on. The possibility of obtaining an analogue classification for some of the other families relies on the possibility of describing the solutions to system of Diophantine equations together with the geometric constrains imposed on the differential equations that come with a solution or a family of solutions. For the seventh family, however, there seems to be no good starting point other than enumerating the solutions of~(\ref{R0}). We may use the previously obtained results to shed some light on the semicompleteness of the equations of the seventh family which have an invariant plane. 

\begin{fact}\label{mainfact} The sets of Kowalevski exponents of the semicomplete quadratic homogeneous vector fields on~$\mathbb{C}^3$ which have an isolated singularity at the origin, which belong to the seventh family, and which have an invariant plane, belong to the list of table~\ref{tab:admissinvplane}. The data in this list belong to the variety of admissible spectra.\end{fact}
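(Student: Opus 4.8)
The plan is to reduce the statement to a finite computation that can be carried out with the Test described in Section~\ref{sec:test}. First I would recall the arithmetic constraints that any semicomplete quadratic homogeneous vector field on $\mathbb{C}^3$ with an isolated singularity at the origin must satisfy: by \cite[section~2.1]{guillot-semi} its seven nondegenerate radial orbits have pairs of nonzero integers $(u_i,v_i)$ as Kowalevski exponents, and these are subject to relations (\ref{knowneq0})--(\ref{knowneq2}). Since we are moreover assuming the vector field has an invariant plane, the corresponding self-map $f$ of $\mathbb{P}^2$ has an invariant line, so three of the seven pairs --- say $(u_5,v_5),(u_6,v_6),(u_7,v_7)$, with $u_i$ tangent to the line --- additionally satisfy (\ref{rel-invline1}) and (\ref{rel-CS}). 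So the admissible data belong to the variety cut out by all of (\ref{knowneq0})--(\ref{rel-CS}), and by Theorem~\ref{thm:new-relation} we know four further algebraically independent relations; together with the known ones these generate (in the sense of Theorem~\ref{thm:generate}) all relations among the multipliers.

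Next I would restrict to the seventh family, i.e.\ assume that no proper subset $J\subsetneq\{1,\dots,7\}$ satisfies $\sum_{i\in J}1/d_i=1$, where $d_i=u_iv_i$. The key finiteness input is that equation (\ref{R0}), $\sum_{i=1}^7 1/d_i=1$, has only finitely many solutions in nonzero integers belonging to the seventh family; this is a classical Diophantine fact about unit fractions (an Erd\H{o}s--Straus-type bound on the denominators of a length-seven Egyptian-fraction representation of $1$ with no shorter sub-representation). For each such tuple $(d_1,\dots,d_7)$, and for each of the finitely many ways of writing each $d_i=u_iv_i$ as a product of two integers, we obtain a finite list of candidate spectra $(u_1,v_1,\dots,u_7,v_7)$. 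We then discard those that fail relations (\ref{knowneq1})--(\ref{knowneq2}) or, for the three ``invariant-line'' points, (\ref{rel-invline1})--(\ref{rel-CS}); here we must also sum over the choice of which three of the seven pairs lie on the invariant line and, within those three, which coordinate is the tangential one $u_i$. Each surviving candidate is fed to the Test: if $W\cap\Pi=\varnothing$ or the fourth fixed point is degenerate, the data is not realizable by a self-map in $E^{(4)}$ and is eliminated; the data that pass the Test are exactly those listed in Table~\ref{tab:admissinvplane}, and by construction they satisfy all the relations (\ref{knowneq0})--(\ref{rel-CS}), hence lie in the variety of admissible spectra.

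The main obstacle is controlling the size of the enumeration so that it terminates in practice: the number of length-seven Egyptian-fraction solutions of $1$ is large, the factorizations $d_i=u_iv_i$ multiply this further, and the combinatorial choices of the invariant-line triple and the tangential coordinate add another polynomial factor --- all before the (individually expensive) Gr\"obner-basis computations inside the Test are invoked. I would handle this by pruning aggressively and in the right order: first impose (\ref{R0}) and the seventh-family condition to bound the $|d_i|$; then, rather than factoring blindly, use the linear relations (\ref{knowneq1}), (\ref{knowneq2}), (\ref{rel-invline1}), (\ref{rel-CS}) as early sieves on the integer tuples $(u_i,v_i)$, which cut the list down to a manageable size before any Test is run; and only then run the Test on the residue. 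A secondary subtlety is that the Test can produce false positives (it certifies membership in $W$, which may have spurious components), so for the finitely many data that pass I would either exhibit an explicit realizing vector field or argue directly that the corresponding point of $W$ lies on the component $V$; in either case the conclusion we actually need --- that the listed data lie in the variety of admissible spectra --- follows already from their satisfying (\ref{knowneq0})--(\ref{rel-CS}), which was checked in the sieving step.
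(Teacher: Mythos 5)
Your overall architecture --- finite Diophantine enumeration of the seventh-family solutions of~(\ref{R0}), sieving by the known relations, and a final pass through the Test of section~\ref{sec:test} --- is the same as the paper's, but there is a genuine gap at the enumeration step. The seventh-family condition does \emph{not} bound the $|d_i|$ to a manageable range: Sylvester-type growth (e.g.\ $1/2+1/3+1/7+1/43+1/1807+\cdots$) produces seventh-family solutions of~(\ref{R0}) with denominators of size $10^{13}$ and beyond, so ``first impose~(\ref{R0}) and the seventh-family condition to bound the $|d_i|$'' does not yield a feasible (or even clearly terminating-in-practice) enumeration in the critical case $d_1=2$. The paper's proof avoids this by injecting semicompleteness \emph{before} enumerating: by lemma~7 of~\cite{guillot-qc3}, when some $d_i=2$ the vector field has an invariant plane tangent to the $\pm1$ exponent of that orbit; the restriction to that plane is semicomplete, so by~(\ref{numbers2d}) and~(\ref{rel-invline1})--(\ref{rel-CS}) the other two in-plane orbits have exponents $(n,m)$ and $(n+m,-m)$, whose combined contribution to~(\ref{R0}) collapses to the single unit fraction $1/(n(n+m))$. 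This reduces the master equation to the five-term equation~(\ref{cincounmedio}), which is what makes the computation tractable; the subcase $(u_1,v_1)=(-1,-2)$ then turns out to have no solutions, and only the residual case $d_1\geq3$ is enumerated directly (about 160 sets, of which only three admit a candidate invariant-plane triple, all failing the Test). Your proposal, which fixes the invariant plane only through the hypothesis of the Fact and brute-forces over all $\binom{7}{3}\cdot2^3$ plane configurations after enumerating~(\ref{R0}), misses this reduction entirely.

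A second, conceptual error: you assert that the final claim --- that the listed data lie in the variety of admissible spectra --- ``follows already from their satisfying~(\ref{knowneq0})--(\ref{rel-CS}).'' It does not. That variety is the Zariski closure $M$ of the image of the spectral map, which is a \emph{proper} subvariety of the locus cut out by the five known relations; the existence of further independent relations is precisely the content of theorem~\ref{thm:new-relation}. Membership in $M$ must be certified by realizability (the paper exhibits, for each entry of table~\ref{tab:admissinvplane}, an actual self-map, so that no false positive of the Test survives); your fallback of exhibiting explicit realizing maps is the right move, but it cannot be replaced by the known relations alone.
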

 
\begin{table}
\begin{tabular}{r|l|l}
\# & \begin{tabular}{l} Kowalevski exponents of the \\  radial orbits away from~$\ell$ \end{tabular} & \begin{tabular}{l} Kowalevski exponents of \\ the radial orbits within~$\ell$ \end{tabular}  \\ \hline
1 &  $(2,5)$, $(1,3)$, $(1,3)$,    $(-5,12)$ & $(1,2)$, $(5,-1)$, $(-5,4)$ \\
2 & $(2,5)$, $(1,3)$, 
$(1,3)$, $(-1,4)$ & $(1,2)$, $(17,-5)$, $(-17,12)$   \\
3 & $(1,3)$, $(2,5)$, $(-5,17)$, $(12,-17)$  & $(1,2)$, $(1,3)$, $(-1,4)$  \\
4 & $(1,3)$,  $(2,5)$, $(3,4)$,  $(12,-17)$ & $(1,2)$, $(22,-5)$, $(-22,17)$  \\
5 & $(1,3)$, $(2,5)$, $(3,4)$, $(-5,17)$ & $(1,2)$, $(29,-17)$, $(-29,12)$ \\
6 & $(1,3)$, $(2,5)$, $(2,5)$, $(10,-13)$ & $(1,2)$, $(16,-3)$, $(-16,13)$  \\
7 & $(1,3)$, $(2,5)$, $(2,5)$, $(-3,13)$ & $(1,2)$, $(23,-13)$, $(-23,10)$  \\
8 &  $(1,4)$, $(2,3)$, $(2,3)$, $(-3,8)$ & $(1,2)$, $(11,-3)$, $(-11,8)$  \\
9 &  $(1,4)$, $(2,3)$, $(2,3)$, $(-7,12)$ & $(1,2)$, $(9,-2)$, $(-9,7)$  \\
10  &  $(1,4)$, $(2,3)$, $(2,3)$, $(-2,7)$ & $(1,2)$, $(19,-7)$, $(-19,12)$  
\end{tabular}
\caption{The ten realizable sets of exponents of fact~\ref{mainfact}.} \label{tab:admissinvplane}
\end{table}

Let us establish fact~\ref{mainfact} by using the kind of mixed strategy previously mentioned.

There are finitely many solutions to equation~(\ref{R0}) belonging to the seventh family. In fact, we may order~$d_1,\ldots, d_7$ so that
\begin{itemize}
\item $d_1>0$;
\item if~$d_i$ and~$d_j$ are positive and $i<j$, $d_i<d_j$; if~$d_i$ and~$d_j$ are negative and $i<j$, $d_i>d_j$;
\item $d_{j+1}>0$ if~$\sum_{i=1}^j 1/d_i < 1$;  $d_{j+1}<0$ if~$\sum_{i=1}^j 1/d_i > 1$.
\end{itemize}
With such an order, $d_1$ must belong to~$\{1,\ldots,7\}$, since \[1=\sum_{i=1}^7 \frac{1}{d_i} \leq \sum_{d_i>0} \frac{1}{d_i}\leq \sum_{d_i>0} \frac{1}{d_1}\leq \frac{7}{d_1}.\] There are thus finitely many choices for~$d_1$. Given $d_1,\ldots, d_j$, there are, by the same reasons, finitely many choices for~$d_{j+1}$. These observations lead  to an algorithm to list all the solutions to~(\ref{R0}) belonging to the seventh family. From it, we may construct all the solutions to the system~(\ref{knowneq0})--(\ref{knowneq2}) belonging to the seventh family.
 
If~$d_1=2$, either~$(u_1,v_1)=(  1,  2)$ or~$(u_1,v_1)=(-1,-2)$. In both cases, the semicompleteness of the vector field implies that there is an invariant plane  tangent to the exponent~$\pm 1$ (see lemma~7 in~\cite{guillot-qc3}). On its turn, this implies, to begin with, that relations~(\ref{rel-invline1})--(\ref{rel-CS}) must hold among the exponents of the radial orbits that belong to this invariant plane.

If~$(u_1,v_1)=(1,2)$, for the other two radial orbits contained in the invariant plane, the exponents tangent to it are, from~(\ref{numbers2d}), necessarily of the form~$m$ and~$-m$. Thus, from~(\ref{rel-invline1})--(\ref{rel-CS}), the exponents of these two radial orbits are of the form~$(n,m)$ and~$(n+m,-m)$. Equation~(\ref{R0}) becomes
\[\frac{1}{2}+\sum_{i=2}^5\frac{1}{d_i}+\frac{1}{n(n+m)}=1,\]
and we may start by finding the integer solutions of the equation
\begin{equation}\label{cincounmedio} \sum_{i=1}^5 \frac{1}{k_i}=\frac{1}{2}\end{equation}
such that the sum of no proper subset of the summands is~$1/2$ (if~$d_i=2$ or~$n(n+m)=2$ we would be either in the first or in the second family). From one solution to this equation we may find all the possible values of~$m$ and~$n$ and those of $d_2$, \ldots, $d_5$;  from the latter we may obtain all the values of~$u_2$, $v_2$, \ldots, $u_5$, $v_5$ such that~$u_iv_i=d_i$. After eliminating the solutions that do not belong to the seventh family, we may investigate for compliance with the relations~(\ref{knowneq0})--(\ref{knowneq2}). All this is carried out by the program in the ancillary file~\texttt{7th\_2pos.sage}.  As output we obtain the list in the file~\texttt{7th\_2pos.out}. It is a list of all the integer solutions to equations~(\ref{knowneq0})--(\ref{knowneq2}) with~$(u_1,v_1)=(1,2)$ and with an invariant plane in the previously explained configuration. We may subject all the items in this list to our~Test (cf.~section~\ref{sec:test}).\footnote{This is done by the program \texttt{testlist.sage}. }
Those that pass the test appear in table~\ref{tab:admissinvplane}.\footnote{They correspond, respectively, to lines~1, 2, 90, 91, 92,  88, 89, 101, 103 and~104 in the file \texttt{7th\_2pos.out}; line 102 passes the test but is, up to ordering, identical to line~101.} Quite remarkably, every  set in the table may be realized by a self-map: even if the Test may, in principle, give false positives,  none is to be found in the present sample.

If~$(u_1,v_1)=(-1,-2)$, by~(\ref{numbers2d}), for the other two radial orbits contained in the invariant plane, the exponents tangent to the plane are both equal to~$1$. The exponents of these two radial orbits are of the form~$(1,m)$ and~$(1,1-m)$. Equation~(\ref{R0}) becomes
\[\frac{1}{2}+\sum_{i=2}^5\frac{1}{d_i}+\frac{1}{m(m+1)}=1-\frac{1}{2}.\]
It is also subordinate to equation~(\ref{cincounmedio}). We may proceed as before. The program is in the file~\texttt{7th\_2neg.sage}. Quite surprisingly, there are no solutions at all! There is no need in this case to apply our test.

Lastly, suppose that~$d_1\geq 3$. We can determine the sets of seven pairs of nonzero integers  that belong to the seventh family and for which~$d_i\geq 3$. This is done by the program \texttt{7th\_geq3.sage}. There are around 160 such sets (there are some repetitions); they may be found in the file~\texttt{7th\_geq3.out}, which we will not reproduce here. From each unordered set of seven unordered pairs appearing in this list,  we may look for (unordered) triples of ordered pairs of    exponents satisfying relations~(\ref{rel-invline1})--(\ref{rel-CS}). These are those which have, potentially, an invariant plane. This is done by the program~\texttt{extract\_lines.sage}. After eliminating repetitions, we obtain only the three admissible sets:
\begin{itemize}
\item  $(1,3)$, $(2,2)$, $(-3,28)$, $(-10,-14)$, $(3,1)$, $(1,10)$, $(-3,28)$,
\item   $(-2,-2)$, $(2,2)$, $(1,5)$, $(1,-15)$, $(1,3)$, $(14,2)$, $(-14,30)$,
\item   $(1,3)$, $(1,5)$, $(2,3)$, $(-4,45)$, $(1,4)$, $(-5,-3)$, $(5,-18)$.
\end{itemize}
They all belong to different solutions of~(\ref{R0}).
We may  submit these three to our Test, which they do no not pass, and this certifies that they are not realizable. This completes the argument establishing fact~\ref{mainfact}.

\subsection{Integrating the equations} We have established that the sets of Kowalevski exponents of the semicomplete vector fields in the class under consideration belong to table~\ref{tab:admissinvplane}, and a natural question is if the vector fields whose exponents belong to this list are semicomplete. We would like to finish by discussing the integration of some of these equations. However, since this integration is not the central point of this article and since the complete details of this integration would certainly require a more extended presentation, we will give only a brief account of our findings. They prove that some of the vector fields whose exponents belong to  table~\ref{tab:admissinvplane} are semicomplete and strongly suggest that the other ones are semicomplete as well.\\

The vector fields with  an invariant plane whose exponents are in table~\ref{tab:admissinvplane} have all a homogeneous polynomial first integral and thus, whenever the vector fields are semicomplete, they do so in very specific and understood ways (see Theorem~B in~\cite{guillot-rebelo}). We succeeded in integrating some of these equations (for instance, we established that all the vector fields associated to the first through the seventh sets of exponents in table~\ref{tab:admissinvplane} are semicomplete). Like in~\cite{guillot-qc3}, we witness the ubiquity of vector fields having homogeneous first integrals whose generic level set identifies to a Zariski open set of an Abelian surface where the vector field becomes a linear one.

For each one of the first seven sets of exponents of table~\ref{tab:admissinvplane}, there are two vector fields realizing the data. Their coefficients belong to a quadratic extension of~$\mathbb{Q}$ and are Galois conjugates of one another.  
 
All the equations coming from data~1 through~5 are defined  over~$\mathbb{Q}(\sqrt{3})$ and have homogeneous polynomial first integrals of degree~$12$.  They are integrated by the solutions of the homogeneous Chazy~X equation
\begin{equation}\label{chazyx}\phi'''   =  6\phi^2\phi'+3\frac{9+7\sqrt{3}}{11}(\phi'+\phi^2)^2
\end{equation}
and their derivatives. This equation, together with the Chazy~IX equation that will occur shortly, appear in Chazy's study of third-order equations~\cite{chazy}. When seen as vector fields on~$\mathbb{C}^3$, they have first integrals and, in restriction to a generic level set, they identify to linear flows on Zariski-open subsets of Abelian surfaces; in consequence, they are semicomplete (see~\cite{guillot-chazy} for a detailed account). For instance, the vector field for data~3 reads
\begin{multline*} 
z_1[5(1+\sqrt{3})z_1+80(1+\sqrt{3})z_2-(107-12\sqrt{3})z_3]\frac{\partial}{\partial z_1}+\\
+[90(1+\sqrt{3})z_1z_2+(757-433\sqrt{3})z_1z_3-(1+5\sqrt{3})z_2^2+\\ 42(11-6\sqrt{3})z_3z_2-(757-433\sqrt{3})z_3^2]\frac{\partial}{\partial z_2}+\\
-z_3[55(1+\sqrt{3})z_1+115(1+\sqrt{3})z_2+(47-72\sqrt{3})z_3]\frac{\partial}{\partial z_3},
\end{multline*}
and the linear function 
\[\phi= 5(5+\sqrt{3})(z_2-z_1)+(56\sqrt{3}-94)z_3\]
is a solution to~(\ref{chazyx}). From this expression and its first and second derivatives, one can obtain rational expressions for~$z_1$, $z_2$ and~$z_3$ in terms of~$\phi$, $\phi'$ and~$\phi''$ and, through this, we establish that the vector field is semicomplete. We can similarly integrate equations~1, 2, 4 and~5, which are also semicomplete.

In a similar manner, the equations coming from data~6 and~7, defined over~$\mathbb{Q}(\sqrt{5})$, have homogeneous polynomial first integrals of degree~$10$ and are integrated by the solutions of the homogeneous Chazy~IX equation
\begin{equation}\label{chazyix} \phi'''  = 18(\phi'+\phi^2)(\phi'+3\phi^2)-6(\phi')^2
\end{equation}
For instance, the vector field for data~7 reads 
\begin{multline*}  
3z_1[2\sqrt{5}z_1+(6-5\sqrt{5})z_2-(6+5\sqrt{5})z_3]\frac{\partial}{\partial z_1}+\\
+[(67+30\sqrt{5})(z_1-z_3)z_3+(13-6\sqrt{5})z_1z_2+(5-3\sqrt{5})z_2^2+(82+48\sqrt{5})z_3z_2]\frac{\partial}{\partial z_2}+\\
-[(67-30\sqrt{5})(z_1-z_2)z_2+(13+6\sqrt{5})z_1z_3+(5+3\sqrt{5})z_3^2+(82-48\sqrt{5})z_3z_2]\frac{\partial}{\partial z_3},
\end{multline*}
and the linear function
\[\phi  =\frac{1}{2}(9-3\sqrt{5})z_2-\frac{1}{2}(9+3\sqrt{5})z_3\]
is a solution to~(\ref{chazyix}). From it we can obtain rational expressions for~$z_1$, $z_2$ and~$z_3$ in terms of~$\phi$, $\phi'$ and~$\phi''$, establishing that the vector field is semicomplete. We have an analogue phenomenon for the vector fields of data~6, which are also semicomplete.

The  equations coming from data~8 in table~\ref{tab:admissinvplane} are all defined over~$\mathbb{Q}(\sqrt{2})$; there are six of them (three pairs of Galois conjugates). They all have a homogeneous polynomial first integral of degree~8 and commute with a  homogeneous vector field of degree~$12$, which becomes a rational one of degree~$4$ after dividing by the first integral. The vector field
\begin{multline}\label{case8}z_1[3z_1+(9\sqrt{2}-15)z_2-4z_3]\frac{\partial}{\partial z_1}+
z_2[10z_3-9z_1+(9\sqrt{2}-3)z_2]\frac{\partial}{\partial z_2}+ \\ +
[36(1-\sqrt{2})z_2z_3+18(5-3\sqrt{2})(z_1-z_2)z_2-z_3^2]\frac{\partial}{\partial z_3}\end{multline}
gives two of them (one for each determination of~$\sqrt{2}$). Setting
\[y=z_3^2+9(3-2\sqrt{2})(z_2-z_1)z_2-6(1-\sqrt{2})z_2z_3\]
for any solution to~(\ref{case8}) gives a solution to the third-order equation
\begin{equation}\label{burn}(y''')^2-24y'''y'y-192y^5+80y''y^3+120(y')^2y^2+4y''(y')^2-8(y'')^2y=0,\end{equation}
a particular case of Cosgrove's reduced F-V equation~\cite[section~5]{cosgrove-p2}
\[y^{\mathrm{(iv)}}=20yy''+10(y')^2-40y^3.\] 
This does not prove that the vector field~(\ref{case8}) is semicomplete, but  points in that direction (it proves that the solutions have finitely many determinations). Equation~(\ref{burn}) is related to the Jacobian of Burnside's curve~$\zeta^2=\xi(\xi^4-1)=0$ (see~\cite[section~5.1]{guillot-qc3} for details). All the equations coming from data~8 seem likely to be integrated by the solutions of~(\ref{burn}) and its derivatives.

All the vector fields coming from data~9 and~10 in table~\ref{tab:admissinvplane} are defined over~$\mathbb{Q}(\sqrt{3})$. They all have a homogeneous polynomial  first integral of degree~$12$ and commute with a  homogeneous polynomial vector field of degree~$8$ (those from data~9) or~$20$ (those from data~10)---in the last case the commuting vector field becomes a rational one of degree~$8$ after dividing by the first integral---.  The following conjectural geometric description would entail their semicompleteness. For~$\rho$ a primitive third root of unity and~$E_\rho=\mathbb{C}/\langle 1,\rho\rangle$, $E_\rho\times E_\rho$ has the automorphism~$\mu$ induced by~$(z,w)\mapsto(\rho^2 w, -\rho^2 z)$, and, for~$\lambda=-i\rho$,  vector fields~$X$ and~$Y$ such that~$\mu_*X=\lambda X$ and~$\mu_*Y=\lambda^7Y$~\cite[section~5.2]{guillot-qc3}.  It is likely that, for all the vector fields coming from data~9 and 10, a generic level set of the first integral identifies to a Zariski-open set of~$E_\rho\times E_\rho$ where the vector field becomes~$X$ and the commuting one~$Y$ (see also section~5.2 in~\cite{guillot-qc3}).

\appendix

\section{A relative fixed-point formula}\label{appendix:relative-formula}

We give here an elementary proof of relation~(\ref{rel-CS}) following the lines of the proof of theorem~12.4 in~\cite[section~12]{milnor-one}. A more conceptual one, based on the Atiyah-Bott fixed-point theorem, appears in the unpublished notes~\cite{ramirez2016woods}.

\begin{theorem} 
Let~$f:\mathbb{P}^2\to\mathbb{P}^2$ be a rational map, $\ell\subset\mathbb{P}^2$ a line such that~$f(\ell)\subset \ell$. Suppose that~$f|_\ell$ has the fixed points~$p_1$, \ldots, $p_{k}$ and that these are simple. Let~$\lambda_i$ and~$\mu_i$ be  be the eigenvalues of~$Df$ at~$p_i$, with~$\lambda_i$ the eigenvalue tangent to~$\ell$.  Then, 
\[\sum_{i=1}^{k} \frac{1-\mu_i}{1-\lambda_i}=1.\]
\end{theorem}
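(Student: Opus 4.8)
The plan is to deduce the formula from a one-variable residue computation on $\ell\cong\mathbb{P}^1$, in the spirit of Milnor's proof of the rational fixed-point formula (theorem~12.4 of~\cite{milnor-one}). First I would normalize coordinates. Since $f|_\ell$ is a self-map of $\ell$ with only finitely many fixed points (we may assume $f|_\ell\neq\operatorname{id}$, otherwise the hypotheses are vacuous), after applying a suitable element of $\operatorname{Aut}(\mathbb{P}^2,\ell)$ I may assume that $[0:0:1]$ is not a fixed point of $f|_\ell$. Passing to the affine chart $z_2\neq 0$ with coordinates $(x,y)=(z_1/z_2,z_3/z_2)$, the line $\ell$ becomes $\{x=0\}$, the point $[0:0:1]$ becomes $y=\infty$, and all the $p_i$ lie in $\{y\in\mathbb{C}\}$.

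Writing $f$ in homogeneous coordinates as $[z_1A:P_2:P_3]$ — the divisibility of the first component by $z_1$ is forced by $f(\ell)\subset\ell$ — the map reads
\[
  (x,y)\longmapsto\Bigl(\tfrac{x\,a(x,y)}{p_2(x,y)},\ \tfrac{p_3(x,y)}{p_2(x,y)}\Bigr),\qquad a=A(x,1,y),\ \ p_j=P_j(x,1,y).
\]
Thus $g(y):=p_3(0,y)/p_2(0,y)$ is the rational function $f|_\ell$, with $g'(p_i)=\lambda_i$, and $\mu(y):=a(0,y)/p_2(0,y)$ is the transverse multiplier of $f$ along $\ell$, with $\mu(p_i)=\mu_i$: indeed $Df|_{p_i}$ is lower triangular in these coordinates, with diagonal $(\mu_i,\lambda_i)$ and $\lambda_i$ the eigenvalue tangent to $\ell$.

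The heart of the argument is the meromorphic $1$-form on $\ell$
\[
  \omega=\frac{1-\mu(y)}{y-g(y)}\,dy=\frac{p_2(0,y)-a(0,y)}{\Phi(y)}\,dy,\qquad \Phi(y)=y\,p_2(0,y)-p_3(0,y),
\]
the second expression coming from $1-\mu(y)=(p_2(0,y)-a(0,y))/p_2(0,y)$ and $y-g(y)=\Phi(y)/p_2(0,y)$. The essential point is that the factor $p_2(0,y)$ cancels, so $\omega$ is regular away from the zeros of $\Phi$ and from $y=\infty$; no residue hides at a pole of $\mu$ or of $g$. At a fixed point $p_i$, simplicity is equivalent to $\lambda_i\neq 1$, hence $\Phi'(p_i)=(1-\lambda_i)\,p_2(0,p_i)\neq 0$ (with $p_2(0,p_i)\neq 0$ because $f$ is a morphism), so $\omega$ has there a simple pole with $\operatorname{Res}_{p_i}\omega=\dfrac{p_2(0,p_i)-a(0,p_i)}{(1-\lambda_i)\,p_2(0,p_i)}=\dfrac{1-\mu_i}{1-\lambda_i}$. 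At $y=\infty$, the normalization $[0:0:1]\notin\operatorname{Fix}(f|_\ell)$ guarantees that the coefficient $\kappa$ of $z_3^{\deg f}$ in $P_2$ is nonzero, so $p_2(0,y)-a(0,y)$ has degree $\deg f$ and $\Phi$ degree $\deg f+1$, with the same leading coefficient $\kappa$; hence $\omega=(y^{-1}+O(y^{-2}))\,dy$ near infinity and $\operatorname{Res}_\infty\omega=-1$. Since the residues of a $1$-form on $\mathbb{P}^1$ sum to zero, $\sum_i\frac{1-\mu_i}{1-\lambda_i}=1$.

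The main obstacle is exactly the bookkeeping of residues ``at infinity'' inherent to this type of argument: one must be certain that $\omega$ has no poles besides the $p_i$ and $y=\infty$ — which is precisely what the cancellation of $p_2(0,y)$ secures, and what makes $\omega$ preferable to $dy/(y-g(y))$ taken alone — and that its residue at infinity equals $-1$, which rests on the matching of degrees and leading coefficients of numerator and denominator. Once the normalization is in place these verifications are routine; without it an extra fixed point at infinity would appear and the standard modifications of Milnor's computation would be required.
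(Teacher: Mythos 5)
Your proof is correct and follows essentially the same route as the paper's: the same meromorphic $1$-form $\frac{1-\mu(y)}{y-g(y)}\,dy$ on $\ell\cong\mathbb{P}^1$, the same residue $\frac{1-\mu_i}{1-\lambda_i}$ at each simple fixed point, the same residue $-1$ at the normalized point at infinity, and the Residue Theorem. The only differences are cosmetic (choice of affine chart and a slightly more explicit bookkeeping of the cancellation of $p_2(0,y)$ and of leading coefficients, where the paper argues via limits).
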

\begin{proof}In homogeneous coordinates~$[z_1:z_2:z_3]$, let~$\ell$ be given by~$z_1=0$ and~$f$ be given by
\[[z_1:z_2:z_3]\mapsto[z_1\widehat{P}(z_1,z_2,z_3):\widehat{Q}(z_1,z_2,z_3):\widehat{R}(z_1,z_2,z_3)],\]
for~$\widehat{Q}$ and~$\widehat{R}$  homogeneous polynomials of degree~$d$ and~$\widehat{P}$ a homogeneous polynomial of degree~$d-1$. Consider the affine chart~$z_3\neq 0$ and suppose, without loss of generality, that it contains the~$d+1$ fixed points of~$f|_\ell$. In the coordinates~$[x:y:1]$, $f$ is given by
\[(x,y)\mapsto\left(\frac{xP(x,y)}{R(x,y)},\frac{Q(x,y)}{R(x,y)}\right),\]
where~$P(x,y)=\widehat{P}(x,y,1)$ and so on. Let~$y_1$, \ldots, $y_{d+1}$ be the fixed points of~$f$ within~$\ell$. Consider the rational one-form in~$\ell$
\[\eta=\frac{1-\frac{P}{R}(0,y)}{y-\frac{Q}{R}(0,y)}dy.\] 
Since all the fixed points of~$f|_\ell$ are within the chosen affine chart, $R(0,y)=cy^d+\cdots$ for some~$c\neq 0$. This implies that~$\lim_{y\to\infty} P(0,y)/R(0,y)=0$ and that~$\lim_{y\to\infty} Q(0,y)/R(0,y)\in\mathbb{C}$. Hence, $\eta$ has a pole at~$\infty$ with residue~$-1$.  The other poles of~$\eta$ are given by the fixed points of~$f|_\ell$. Let us calculate the corresponding residues. On the one hand, $y-Q(0,y)/R(0,y)$, which gives the restriction of~$f$ to~$\ell$, has, for its Taylor development at~$y_i$,
\[y_i+(y-y_i)-(y_i+f|_\ell'(y_i)(y-y_i)+\cdots)=(1-\lambda_i)(y-y_i)+\cdots.\]
On the other, a straightforward computation shows that
\[\mu_i=\left.\frac{\partial}{\partial x} \left(x\frac{P}{R}\right)\right|_{(0,y_i)}=\frac{P(0,y_i)}{R(0,y_i)}.\] 
The Residue Theorem allows us to conclude.\end{proof}

\bibliography{multipliers}
\bibliographystyle{aomalpha}

\end{document}